\documentclass[center]{aspm}
\articleinfo{}{}{}
\usepackage{mathtools,amssymb,amsthm}
\usepackage{eucal}
\usepackage{cite}
\usepackage[hidelinks]{hyperref}
\ifdefined\pdfpagewidth
  \AtBeginDocument{\pdfpagewidth=210mm\pdfpageheight=297mm}
\fi
\hypersetup{pdftitle={Stability of Kantorovich potentials via heat kernel regularization},
 pdfauthor={Bang-Xian Han and Zhuo-Nan Zhu},
 pdfsubject={A survey of heat kernel regularization and stability of Kantorovich potentials},
 pdfkeywords={optimal transport, Kantorovich potential, heat kernel, RCD space, Heisenberg group}}
\allowdisplaybreaks[1]
\numberwithin{equation}{section}
\theoremstyle{plain}
\newtheorem{proposition}{Proposition}[section]
\newtheorem{lemma}[proposition]{Lemma}
\newtheorem{maintheorem}{Theorem}

\newcommand{\R}{\mathbb R}
\newcommand{\HH}{\mathbb H}
\newcommand{\PP}{\mathcal P}
\newcommand{\mm}{\mathfrak m}
\newcommand{\dist}{\mathrm d}
\newcommand{\distcc}{\mathrm d_{\mathrm{cc}}}
\newcommand{\dd}{\,\mathrm d}
\newcommand{\E}{\mathbb E}
\newcommand{\Var}{\operatorname{Var}}

\newcommand{\Lip}{\operatorname{Lip}}
\newcommand{\diam}{\operatorname{diam}}
\newcommand{\K}{\mathcal K}
\newcommand{\norm}[1]{\lVert #1\rVert}
\newcommand{\nuxt}{\nu_x^{t,\theta}}
\newcommand{\nuhat}{\widehat\nu_x^{t,\theta}}
\title[Stability of Kantorovich potentials]
 {Stability of Kantorovich potentials via heat kernel regularization}
\author[B.-X. Han]{Bang-Xian Han}
\address{School of Mathematics, Shandong University, Jinan 250100, China}
\email{hanbx@sdu.edu.cn}
\author[Z.-N. Zhu]{Zhuo-Nan Zhu}
\address{School of Mathematical Sciences, University of Science and Technology
of China, Hefei 230026, China}
\email{zhuonanzhu@mail.ustc.edu.cn}
\thanks{This work is supported by the Young Scientist Programs of the
Ministry of Science \& Technology of China (2021YFA1000900, 2021YFA1002200),
the National Natural Science Foundation of China (12201596, 12671067),
the Shandong Provincial Natural Science Foundation (ZR2025QB05), and
the Taishan Scholars Program of Shandong Province (tsqn202408059).}
\rcvdate{}
\rvsdate{}
\makeatletter
\@ifundefined{subjclassname@2020}{%
  \@namedef{subjclassname@2020}{2020 Mathematics Subject Classification}}{}
\makeatother
\subjclass[2020]{49Q22, 53C23, 53C17, 58J35}
\keywords{Optimal transport, Kantorovich potential, heat kernel, RCD space,
Heisenberg group}

\begin{document}
\begin{abstract}
We survey heat kernel regularization as a method for quantitative
stability of Kantorovich potentials for the quadratic transport cost.
Within the variational framework of Kitagawa, Letrouit and M\'erigot,
the first and second variations of the regularized transform reduce
stability to a comparison of two variances. If the source density is bounded above and below on a
bounded John domain, and the target is any probability measure on a fixed
compact set, heat kernel estimates verify
this comparison on both finite-dimensional RCD spaces and Heisenberg
groups. This survey includes two new contributions: an $L^2$
estimate on RCD spaces with the optimal dimension-independent H\"older
exponent $1/2$, and a complete heat kernel proof of the same estimate on
Heisenberg groups. 
\end{abstract}
\maketitle

\section{Introduction}\label{sec:introduction}

This survey concerns the stability of Kantorovich potentials, the functions $\phi$ in the dual problem \eqref{eq:duality}, for the cost
$c(x,y)=\dist^2(x,y)/2$ on a metric measure space $(X,\dist,\mm)$
with $\operatorname{supp}\mm=X$.
We fix a source probability measure $\rho$ and study how the potential
$\phi_\mu$, normalized by $\int\phi_\mu\dd\rho=0$, changes with the
target $\mu$.
Qualitative stability of optimal plans and maps follows from compactness;
see Villani~\cite[Theorem~5.20 and Corollary~5.23]{Villani}.
For potentials, uniform Lipschitz bounds give subsequential uniform
convergence; duality and uniqueness identify the normalized limit.
This compactness argument does not provide a quantitative modulus of continuity.

The quantitative estimate studied here is
\begin{equation}\label{eq:question}
 \norm{\phi_{\mu_1}-\phi_{\mu_0}}_{L^2(\rho)}
 \le C W_1^{1/2}(\mu_0,\mu_1).
\end{equation}
Here $W_1$ is the $1$-Wasserstein distance, the targets lie in a fixed
compact set $Y$, and $C$ is independent of the two targets. On $\PP(Y)$,
$W_1$ metrizes weak convergence~\cite[Theorem~6.9]{Villani}.
Theorems~\ref{thm:rcd} and~\ref{thm:heis} below specify the hypotheses
and the dependence of $C$.

An $\mathrm{RCD}(K,N)$ space, where $K\in\R$ and $1\le N<\infty$,
is a metric measure space with a synthetic Ricci lower bound $K$,
a dimension upper bound $N$, and quadratic Cheeger energy.
The last condition means that the Sobolev space $W^{1,2}(X,\dist,\mm)$
is Hilbertian and the associated heat flow is linear;
see~\cite[Sections~4.3 and~5]{AmbrosioGigliSavare}
and~\cite[Section~3.3]{ErbarKuwadaSturm}.
Examples include complete Riemannian manifolds with the corresponding
Ricci and dimension bounds and their measured Gromov--Hausdorff
limits~\cite[Section~3.3]{ErbarKuwadaSturm}.
Finite-dimensional Alexandrov spaces with curvature bounded below,
equipped with their Hausdorff measure, also belong to this class;
see~\cite{Petrunin} and~\cite[Section~1]{AmbrosioGigliSavare}.

Heat kernel regularization replaces the minimum in the $c$-transform
by a logarithmic integral. The resulting variational argument applies
both to nonsmooth RCD spaces and to the sub-Riemannian Heisenberg groups,
yielding the exponent $1/2$ in \eqref{eq:question}. This exponent is optimal uniformly over dimensions in the RCD class
(Section~\ref{subsec:optimality}).

The argument has four steps. First, replace the $c$-transform by
$\Phi_t[\theta]=-t\log P_{t/2}(e^{\theta/t})$, where $(P_s)_{s>0}$ is the heat
semigroup. Along an affine family of weights, the first variation is
$-u_t$ and the second is $-a_t/t$, where $u_t$ and $a_t$ are a
conditional mean and variance, respectively. Heat kernel estimates then give
\[
 \Var_\rho(u_t)\le\frac Ct\int_S a_t\dd\rho.
\]
Finally, integration in the interpolation parameter and passage to
$t=0$ give \eqref{eq:question}. Sections~\ref{sec:regularization}
and~\ref{sec:variance} explain this reduction; the RCD and Heisenberg
sections give two proofs of the same variance inequality.

\subsection{Previous work and the heat kernel method}

In Euclidean space, quadratic optimal transport is closely related to
convex functions through Brenier's theorem~\cite{Brenier}; the Riemannian
counterpart was developed by McCann~\cite[Theorem~13]{McCann}.
Early quantitative
stability results include Berman~\cite{Berman} and
M\'erigot--Delalande--Chazal~\cite{MerigotDelalandeChazal}.
For a broader survey, see~\cite{LetrouitLecture}.
Delalande--M\'erigot~\cite[Theorem~2.1]{DelalandeMerigot} made quantitative convexity
of the Kantorovich functional central to this topic.
With the sign convention used below, the relevant functional is concave.
Methods for combining local estimates on irregular source domains were
developed by Letrouit and M\'erigot~\cite[Section~3]{LetrouitMerigot}.

Kitagawa--Letrouit--M\'erigot established \eqref{eq:question} on
Riemannian manifolds~\cite[Theorem~1.2]{KitagawaLetrouitMerigot}.
They established the abstract framework used in this survey:
variational identities for the regularized Kantorovich functional,
a sufficient condition for stability in terms of quantitative concavity,
and a method for combining local estimates on John domains in metric
spaces; see~\cite[Sections~2.1, 3.2 and 3.3.1]{KitagawaLetrouitMerigot}.
The use of heat kernel regularization for quantitative stability of
Kantorovich potentials was introduced by the authors~\cite{HanZhuRCD}.
It replaces the kernel $e^{-c(x,y)/t}$ by the heat kernel while
retaining this abstract framework.

The two central parts of this survey verify the variance inequality
in different geometries. The authors~\cite[Theorem~1.1]{HanZhuRCD} proved
an $L^1(\rho)$ estimate with rate $W_1^{1/2}$ on finite-dimensional RCD
spaces. Section~\ref{sec:rcd} obtains a new $L^2(\rho)$ estimate with the
same exponent by summing their local mean oscillation bound over scales
(Lemma~\ref{lem:multiscale}).
Section~\ref{sec:heis} gives a complete new heat kernel proof on $\HH^n$.
These groups fail to satisfy any finite-dimensional $\mathrm{CD}(K,N)$
condition, as discovered by Juillet~\cite{Juillet}.
The main difficulty is to obtain derivative estimates uniform at the
cut locus. We split the heat kernel into two factors at half the time
and integrate over the intermediate point. For distinct endpoints,
the resulting measure concentrates near geodesic midpoints, which
belong to neither endpoint's cut locus~\cite[Lemma~3.3]{NeelSacchelli}.
The resulting integral estimates for logarithmic derivatives are then
used along horizontal segments to prove the variance inequality.

Related work of the authors treats quantitative stability of Wasserstein
barycenters on Alexandrov spaces~\cite{HanZhuBarycenters} and of
multi-marginal optimal transport maps~\cite{HanZhuMultimarginal}.
Here we concentrate on the heat kernel method for source Kantorovich
potentials and \eqref{eq:question}.

\section{Kantorovich duality and potential stability}\label{sec:duality}

\subsection{Kantorovich potentials and normalization}

Let $S\subset X$ be a bounded open source domain and let $Y\subset X$ be
compact. In both settings $X$ is proper, so $\overline S$ is compact.
Write $\PP(Y)$ for the probability measures supported in $Y$ and put
$D=\diam(\overline S\cup Y)$. For a target $\mu\in\PP(Y)$, the dual
formulation of quadratic transport is~\cite[Theorem~5.10]{Villani}
\begin{equation}\label{eq:duality}
 \frac12W_2^2(\rho,\mu)
 =\sup_{\phi(x)+\psi(y)\le c(x,y)}
 \left(\int_S\phi\dd\rho+\int_Y\psi\dd\mu\right).
\end{equation}
A maximizing pair can be chosen $c$-conjugate:
\[
 \phi(x)=\psi^c(x):=\min_{y\in Y}\{c(x,y)-\psi(y)\},
 \qquad
 \psi(y)=\min_{x\in\overline S}\{c(x,y)-\phi(x)\}.
\]
These are called the source and target Kantorovich potentials, respectively. We use the
continuous representatives on $\overline S$ and $Y$. The inequality
$|c(x,y)-c(x,y')|\le D\dist(y,y')$ on $\overline S\times Y\times Y$
shows that $\psi$ is $D$-Lipschitz; by symmetry the same holds for
$\phi$ on $\overline S$.

The pair $(\phi+a,\psi-a)$ has the same dual value. We choose the constant
so that $\E_\rho\phi=0$, where $\E_\rho$ denotes expectation with respect
to $\rho$. Uniqueness of the normalized source potential will also follow
from \eqref{eq:coercivity} by taking $\mu_0=\mu_1$. Quantitative estimates can
also be formulated directly modulo constants by using the variance
$\Var_\rho(f)=\int|f-\E_\rho f|^2\dd\rho$.

\subsection{The Kantorovich functional and a dual estimate}

Define the Kantorovich functional by
\[
 \K[\psi]=\int_S\psi^c\dd\rho.
\]
With our convention $\psi^c(x)=\inf_y\{c(x,y)-\psi(y)\}$, it is
concave. Indeed, for $0\le\lambda\le1$,
\[
 ((1-\lambda)\psi_0+\lambda\psi_1)^c
 \ge(1-\lambda)\psi_0^c+\lambda\psi_1^c. 
\]
Integration against $\rho$ gives the concavity inequality for $\K$.
The convex functional used with Legendre transforms in the Euclidean
formulation~\cite[Section~2]{DelalandeMerigot} has a different sign convention.
For target $\mu$, the dual problem maximizes
$\K[\psi]+\int_Y\psi\dd\mu$. Let $(\phi_i,\psi_i)$ be
$c$-conjugate optimal pairs for $\mu_i$, $i=0,1$. Applying optimality first at
$\mu_0$ and then at $\mu_1$ gives
\[
 -\int_Y(\psi_1-\psi_0)\dd\mu_1
 \le\K[\psi_1]-\K[\psi_0]
 \le-\int_Y(\psi_1-\psi_0)\dd\mu_0.
\]
Thus the pairing
\begin{equation}\label{eq:pairing}
 I(\mu_0,\mu_1)
 :=\int_Y(\psi_1-\psi_0)\dd(\mu_1-\mu_0)
 \ge0
\end{equation}
satisfies, by integration of the bound $\Lip(\psi_1-\psi_0)\le2D$
against a coupling,
\begin{equation}\label{eq:pairing-w1}
 0\le I(\mu_0,\mu_1)\le2D W_1(\mu_0,\mu_1).
\end{equation}
The estimate needed for stability is
\begin{equation}\label{eq:coercivity}
 \norm{\phi_1-\phi_0}_{L^2(\rho)}^2\le C I(\mu_0,\mu_1).
\end{equation}
\subsection{Assumptions on the source measure}\label{subsec:source}

In both settings, the source has density bounded above and below:
\begin{equation}\label{eq:density}
 \dd\rho=f\,\mathbf1_S\dd\mm,
 \qquad 0<a_1\le f\le a_2<\infty\quad\mm\text{-a.e. on }S.
\end{equation}
The domain $S$ is assumed to be a bounded John domain. This means that a base
point $x_*\in S$ and a number $\eta>0$ can be chosen so that every
$x\in S$ can be joined to $x_*$ by an arclength-parametrized curve
$\gamma:[0,\ell]\to S$ satisfying
\begin{equation}\label{eq:john}
 \dist(\gamma(s),X\setminus S)\ge\eta s,
 \qquad 0\le s\le\ell.
\end{equation}
Bounded connected Lipschitz domains in Euclidean space are examples;
see~\cite[Section~1.2]{KitagawaLetrouitMerigot}.

\begin{maintheorem}[RCD stability]\label{thm:rcd}
Let $K\in\R$ and $1\le N<\infty$. On an $\mathrm{RCD}(K,N)$ space
$(X,\dist,\mm)$, let $S\subset X$ be a bounded
$\eta$-John domain and let $\rho=f\mathbf1_S\mm$ be a probability
measure with $0<a_1\le f\le a_2<\infty$ almost everywhere on $S$.
For a nonempty compact set $Y\subset X$, put
$D=\diam(\overline S\cup Y)$. There is a constant
$C=C(K,N,a_1,a_2,\eta,D)$ such that, for every
$\mu_0,\mu_1\in\PP(Y)$, the source Kantorovich potentials for the cost
$c=\dist^2/2$, normalized by $\int_S\phi_{\mu_i}\dd\rho=0$, satisfy
\[
 \norm{\phi_{\mu_1}-\phi_{\mu_0}}_{L^2(\rho)}
 \le C W_1^{1/2}(\mu_0,\mu_1).
\]
In particular, the normalized source potential is unique.
\end{maintheorem}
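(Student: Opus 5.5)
By \eqref{eq:pairing-w1}, it suffices to prove the coercivity estimate \eqref{eq:coercivity} with a constant $C=C(K,N,a_1,a_2,\eta,D)$. Then
\[
\norm{\phi_1-\phi_0}_{L^2(\rho)}^2\le 2CD\,W_1(\mu_0,\mu_1).
\]
Uniqueness of the normalized potential follows by taking $\mu_0=\mu_1$, since then $I=0$.

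To prove \eqref{eq:coercivity}, I regularize the $c$-transform with the heat semigroup, following the introduction. For a target weight $\theta$ on $Y$, set
\[
\Phi_t[\theta](x)=-t\log P_{t/2}\bigl(e^{\theta/t}\mathbf 1_Y\bigr)(x),
\]
written with the heat kernel $p_{t/2}(x,y)$ integrated over $y\in Y$ against $\mm$. Because $\mu_i$ may be singular, I first approximate each $\mu_i$ by measures with densities on a neighbourhood of $Y$; $W_1$ is continuous under this approximation, and the potentials converge uniformly by the compactness/uniqueness argument of the introduction. Define $\K_t[\theta]=\int_S\Phi_t[\theta]\dd\rho$. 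Along the affine family $\theta_s=\psi_0+s(\psi_1-\psi_0)$, differentiation under the integral gives
\[
\frac{\dd}{\dd s}\Phi_t[\theta_s]=-u_t,\qquad \frac{\dd^2}{\dd s^2}\Phi_t[\theta_s]=-\frac{a_t}{t}.
\]
Here $u_t(x)$ and $a_t(x)$ are the mean and variance of $\psi_1-\psi_0$ under the probability measure $\nu_x\propto p_{t/2}(x,y)e^{\theta_s(y)/t}\dd\mm(y)$. Varadhan's asymptotics $-t\log p_{t/2}(x,y)\to \dist^2(x,y)$ on RCD spaces (Jiang--Li--Zhang) give $\Phi_t[\psi]\to\psi^c$ uniformly on $\overline S$. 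They also show that, as $t\to0$, $u_t$ at $s=0,1$ converges to $-\phi_0$ and $-\phi_1$ up to constants on $\rho$-a.e. point, using uniqueness of optimal maps on RCD spaces (Gigli, Cavalletti--Mondino).

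The core step is the variance inequality
\[
\Var_\rho(u_t)\le\frac Ct\int_S a_t\dd\rho,
\]
uniformly in $s$ and in $t\in(0,t_0]$. I plan to prove it locally first. On a ball $B$ with $2B\subset S$ and radius comparable to $\sqrt t$, the Li--Yau / Jiang--Li--Zhang gradient bound for the heat kernel,
\[
|\nabla\log p_{t}(\cdot,y)|\le C\bigl(\dist(\cdot,y)/t+t^{-1/2}\bigr),
\]
lets me write $\nabla u_t(x)$ as a covariance under $\nu_x$ between $\psi_1-\psi_0$ and $\nabla_x\log p_{t/2}(x,y)$. Cauchy--Schwarz then gives $|\nabla u_t|^2\le C a_t\,t^{-2}\cdot t$, where $D$ bounds the distances. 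The $(1,2)$-Poincaré inequality on RCD spaces therefore gives a local mean oscillation bound $\fint_B|u_t-u_B|^2\le (C/t)\fint_{2B}a_t$ at scale $\sqrt t$. Lemma~\ref{lem:multiscale}, summing over Whitney-type scales along John curves in the style of Kitagawa--Letrouit--Mérigot, globalizes this to $\Var_\rho(u_t)\le (C/t)\int_S a_t\dd\rho$, with the density bounds $a_1,a_2$ and doubling handling the passage between $\rho$ and $\mm$. I expect this globalization, in particular keeping the exponent $1/2$ dimension-independent, to be the main obstacle. It is also where Lemma~\ref{lem:multiscale} must be invoked carefully; at balls larger than $\sqrt t$, I would chain $\sqrt t$-balls using the John condition \eqref{eq:john}.

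Finally, write $v_s=u_t(\theta_s)$ and $A_s=\int_S a_t(\theta_s)\dd\rho$, and use the regularized analogue of the pairing,
\[
I_t=\int_0^1\frac{A_s}{t}\dd s=\int_S(u_t|_{s=1}-u_t|_{s=0})\dd\rho\text{-type quantity}.
\]
By the variance inequality, $\Var_\rho(\partial_s v)\le\ldots$, and more precisely
\[
\Var_\rho(v_1-v_0)\le\Bigl(\int_0^1\Var_\rho(\partial_sv_s)^{1/2}\dd s\Bigr)^2.
\]
Here I plan to bound $\Var_\rho(u_t(\theta_1))$, or rather $\Var_\rho(v_1-v_0)$, by $C\int_0^1 A_s/t\,\dd s$, which equals $C\bigl(\K_t'[\theta_0]-\K_t'[\theta_1]\bigr)$ paired with $\psi_1-\psi_0$, by concavity. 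Letting $t\to0$, this tends to at most $C\,I(\mu_0,\mu_1)$, and $v_1-v_0\to\phi_1-\phi_0$. Since $\E_\rho\phi_i=0$, the limiting variance is exactly $\norm{\phi_1-\phi_0}^2_{L^2(\rho)}$, which proves \eqref{eq:coercivity} and hence the theorem.
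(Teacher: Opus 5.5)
Your overall strategy (coercivity from a variance inequality, via the first and second variations of the heat-regularized transform) is the paper's, but the variance inequality \eqref{eq:central}, which carries the whole theorem, does not follow from the local argument you propose.

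Your only analytic input is the pointwise log-gradient bound. It gives $b_t\le CD^2/t^2$ and hence $|\nabla u_t|^2\le Ca_t/t^2$. A pointwise bound of order $a_t/t$ is false. When $\nu_x^{t,\theta}$ splits between two distant minimizers (e.g.\ a two-point target), one has $b_t\sim t^{-2}$ and $|\nabla u_t|\sim t^{-1}$ with $a_t\sim1$ on a layer of width $\sim t$.

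Poincar\'e on balls of radius $\sqrt t$ then yields $\int_B|u_t-(u_t)_B|^2\dd\mm\le (C/t)\int_{2B}a_t\dd\mm$. This is worse by a factor $t^{-1/2}$ than \eqref{eq:local} at $r=\sqrt t$, and it is available only at that scale. Chaining $\sqrt t$-balls across $S$ uses chains of length $\sim t^{-1/2}$, so it loses a further factor of order $t^{-1}$. You end with $\Var_\rho(u_t)\le Ct^{-2}\int_Sa_t\dd\rho$. As the paper explains after \eqref{eq:covariance}, this leaves a factor $C/t$ in front of a quantity converging to $I(\mu_0,\mu_1)$, which is useless as $t\downarrow0$.

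Lemma~\ref{lem:multiscale} cannot repair this. It is a one-point summation over dyadic radii, not the John gluing of Section~\ref{subsec:gluing}. Its hypothesis requires mean oscillation bounded by $C_*\sqrt r$ times the averaged $h=a_t/t$ at every radius $r\le r_0$, with $r_0$ independent of $t$. Your gradient bound gives the factor $r t^{-1/2}$ instead, which is admissible only for $r\lesssim t$.

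The missing idea is integrated rather than pointwise control of $b_t$: the bound $\int_Bb_t\dd\mm\le C\mm(2B)/(rt)$ on balls of every radius $r\le1$, which is \eqref{eq:integrated-score}. The paper obtains it from four ingredients:
\begin{itemize}
\item the trace identity \eqref{eq:trace};
\item the Li--Yau inequality \eqref{eq:li-yau} with the sharp constant $A_t=1+O(t)$ and $B_t\le C/t$ (a fixed $A>1$ would again give only $t^{-2}$);
\item the integration by parts \eqref{eq:ip};
\item testing \eqref{eq:trace} against a cutoff while using $|\nabla\log G_t|\le C/t$.
\end{itemize}
Cauchy--Schwarz with \eqref{eq:covariance} then bounds $\int_B|\nabla u_t|\dd\mm$. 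So a weak $(1,1)$-Poincar\'e inequality, not a $(1,2)$ one, gives \eqref{eq:mean-osc} with the factor $\sqrt{r/t}$ at all small scales. Only then do Lemma~\ref{lem:multiscale} and the gluing yield \eqref{eq:central}.

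Your final assembly also needs correcting:
\begin{itemize}
\item $u_t$ converges to $(\psi_1-\psi_0)\circ T_i$, not to $-\phi_i$.
\item The quantity converging to $\phi_1-\phi_0$ (after centering) is $\Phi_t[\theta_1]-\Phi_t[\theta_0]=-\int_0^1u_t(\theta_s)\dd s$. So Cauchy--Schwarz in $s$ must be applied to $\Var_\rho(u_t(\theta_s))$, not to $\Var_\rho(\partial_su_t)=\Var_\rho(a_t/t)$, which nothing controls.
\item Varadhan's limit is $\dist^2/2$, not $\dist^2$.
\item Passing from absolutely continuous approximants to singular targets through ``compactness/uniqueness'' presupposes the uniqueness that the theorem is meant to deliver.
\end{itemize}
The extension $\psi^*$ of \eqref{eq:extension} avoids both that approximation and the degeneracy of $\mathbf 1_Y$ when $\mm(Y)=0$.
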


\begin{maintheorem}[Heisenberg stability]\label{thm:heis}
Let $n\ge1$. Equip $\HH^n$ with the distance and measure specified in
Section~\ref{subsec:heis-heat}: the Carnot--Carath\'eodory distance
$\distcc$ and Lebesgue (Haar) measure $\mm$. Let $S\subset\HH^n$ be a bounded
$\eta$-John domain and let $\rho=f\mathbf1_S\mm$ be a probability
measure with $0<a_1\le f\le a_2<\infty$ almost everywhere on $S$.
For a nonempty compact set $Y\subset\HH^n$, put
$D=\diam(\overline S\cup Y)$. There is a constant
$C=C(n,a_1,a_2,\eta,D)$ such that, for every
$\mu_0,\mu_1\in\PP(Y)$, the source Kantorovich potentials for
$c=\distcc^2/2$, normalized by $\int_S\phi_{\mu_i}\dd\rho=0$, satisfy
\[
 \norm{\phi_{\mu_1}-\phi_{\mu_0}}_{L^2(\rho)}
 \le C W_1^{1/2}(\mu_0,\mu_1).
\]
In particular, the normalized source potential is unique.
\end{maintheorem}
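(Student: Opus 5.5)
The statement to prove is Theorem~\ref{thm:heis}. Via the dual pairing \eqref{eq:pairing-w1}, it suffices to prove the coercivity estimate \eqref{eq:coercivity} with $C=C(n,a_1,a_2,\eta,D)$. Taking square roots then gives the bound $C W_1^{1/2}$, and taking $\mu_0=\mu_1$ gives uniqueness.

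\textbf{Regularized functional and its variations.} Following the outline in the introduction, I replace $\psi^c$ by the regularized transform
\[
 \Phi_t[\psi](x)=-t\log P_{t/2}\bigl(e^{\psi/t}\mathbf 1_Y\,\cdot\bigr)(x),
\]
built from the heat kernel $p_{t/2}$ of the sub-Laplacian on $\HH^n$. Precisely, with $\mu$-weighted integration,
\[
 \Phi_t[\psi](x)=-t\log\int_Y p_{t/2}(x,y)\,e^{\psi(y)/t}\dd\mu(y).
\]
Set $\K_t[\psi]=\int_S\Phi_t[\psi]\dd\rho$ and $\psi_\theta=(1-\theta)\psi_0+\theta\psi_1$. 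Let $\nuxt$ be the probability measure on $Y$ with density proportional to $p_{t/2}(x,\cdot)e^{\psi_\theta/t}$. Then
\[
 \tfrac{d}{d\theta}\Phi_t[\psi_\theta](x)=-u_t(x),\qquad u_t=\E_{\nuxt}(\psi_1-\psi_0),
\]
\[
 \tfrac{d^2}{d\theta^2}\Phi_t[\psi_\theta](x)=-a_t(x)/t,\qquad a_t=\Var_{\nuxt}(\psi_1-\psi_0).
\]

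\textbf{Passage to $t\to0$.} By Varadhan's asymptotics on $\HH^n$, $-t\log p_{t/2}(x,y)\to \distcc^2(x,y)$ uniformly on compacts. Hence $\Phi_t[\psi]\to\psi^c$ uniformly on $\overline S$ as $t\to0$. Combined with the abstract framework of Kitagawa--Letrouit--M\'erigot, the concavity deficit of $\K_t$ along $\theta$ is controlled by $I(\mu_0,\mu_1)$ up to $o(1)$ as $t\to0$. It then remains to show the variance inequality
\[
 \Var_\rho(u_t)\le\frac Ct\int_S a_t\dd\rho
\]
uniformly in $\theta$ and in small $t$. Integrating in $\theta$ and letting $t\to0$, $u_t$ tends to $\phi_0-\phi_1$ up to constants. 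This yields $\Var_\rho(\phi_1-\phi_0)\le C I$, which is \eqref{eq:coercivity} because both potentials are normalized.

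\textbf{The variance inequality and its main obstacle.} I would argue locally first: on a ball $B$ of radius $r\sim\sqrt t$ with $2B\subset S$, bound the mean oscillation of $u_t$ by $\frac Ct\int_{B}a_t$. For this I write $u_t(x)-u_t(x')$ along a horizontal segment and differentiate:
\[
 X_iu_t=\operatorname{Cov}_{\nuxt}\bigl(\psi_1-\psi_0,\;X_i\log p_{t/2}(x,\cdot)\bigr).
\]
By Cauchy--Schwarz, $|X_iu_t|\le a_t^{1/2}\,\bigl(\E_{\nuxt}|X_i\log p_{t/2}(x,\cdot)-\text{mean}|^2\bigr)^{1/2}$. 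So I need $\E_{\nuxt}|X\log p_{t/2}(x,\cdot)-c|^2\lesssim 1/t$, uniformly for $x\in S$ and $y\in Y$, \emph{including near the cut locus}. This is the main obstacle, since the pointwise bound $|X\log p_t|\lesssim \distcc/t$ degenerates in the wrong way there.

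To get around it, I would use the semigroup identity
\[
 p_{t/2}(x,y)=\int p_{t/4}(x,z)\,p_{t/4}(z,y)\dd\mm(z).
\]
Then $X_x\log p_{t/2}$ becomes an average of $X_x\log p_{t/4}(x,z)$ against a measure in $z$ concentrating near geodesic midpoints. Midpoints avoid the cut locus of both endpoints, where Gaussian gradient bounds $|X_x\log p_{t/4}(x,z)|\lesssim \distcc(x,z)/t+t^{-1/2}$ hold uniformly by the Neel--Sacchelli lemma. Since $\distcc\le D$, this needs care: the covariance bound only requires a centered second moment of size $1/t$. I would obtain it from the fluctuation in $z$, which lives at scale $\sqrt t$.

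\textbf{Globalization.} Finally, I would combine the local Poincar\'e-type bounds over a Whitney-type decomposition of the John domain $S$, using the KLM chaining argument. The density bounds $a_1,a_2$ and doubling of Haar measure keep the constants depending only on $n,a_1,a_2,\eta,D$.
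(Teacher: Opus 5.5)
\textbf{Gap: the pointwise variance bound is false.} Your outer reduction follows the paper's framework: variations of $\Phi_t$, passage $t\downarrow0$ through $I(\mu_0,\mu_1)$, and chaining on the John domain. The core of your variance inequality, however, rests on a bound that is false, namely $\E_{\nuxt}|X\log p_{t/2}(x,\cdot)-c|^2\lesssim1/t$ uniformly in $x\in S$. It fails already in $\R$, where there is no cut locus. Take $\rho$ uniform on $(-1,1)$ and $\mu=\frac12(\delta_{-1}+\delta_1)$, whose optimal target potential is $\psi\equiv0$. At $x=0$ the measure $\nuxt$ puts mass $1/2$ near each of $\pm1$, while $\partial_x\log p_{t/2}(0,y)=y/t$, so the variance is of order $t^{-2}$. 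The same happens on a $t$-neighbourhood of every point where $c(x,\cdot)-\theta$ has two separated minimizers, i.e.\ near the singular set of the potential. Your proposed mechanism does not repair this. The $z$-marginal $\nuhat$ is a mixture of bridge laws over $y\sim\nuxt$, so it is spread at scale $O(1)$, not $\sqrt t$, whenever $\nuxt$ is. This is why the paper stresses that pointwise bounds on $b_t$ only give $t^{-2}$ and cannot survive the limit $t\downarrow0$.

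\textbf{Missing idea: an integrated bound through a second derivative.} You need to control $b_t$ only after integration in $x$, through a second derivative of the regularized transform. For every unit left-invariant horizontal $V$ the paper proves $t\Var_{\nuxt}(V_x\log p_{t/2})\le C-V^2\Phi_t[\theta]$ and $|V\Phi_t[\theta]|\le C$, see \eqref{eq:directional-bound}. Along a horizontal segment the second derivative telescopes to bounded endpoint terms, giving $\int_\gamma b_{t,V}\dd s\le C/t$, which is all \eqref{eq:line} needs. This rests on \eqref{eq:contraction} and \eqref{eq:directional-identity}. So the quantity to control under the bridge measure is $\E[(V_x^2H_t)_+]$, a \emph{second} horizontal derivative of $H_t=-t\log p_{t/4}$, at order one (Proposition~\ref{prop:bridge-bounds}). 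That is where midpoint concentration and the Neel--Sacchelli asymptotics off the cut locus are needed. The gradient bounds you cite already hold globally by Eldredge's estimate and need no midpoints.

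\textbf{Smaller points.} If you regularize both weights against the single target $\mu_0$, then $\Phi_t[\psi_1]$ tends to a minimum over $\operatorname{supp}\mu_0$, not to $\psi_1^c$. You need a common reference measure such as $\mu_0+\mu_1$, or the paper's extension $\psi^*$ integrated against Haar measure. Varadhan's limit is $\distcc^2/2$, not $\distcc^2$. The local estimate must hold on balls of $t$-independent radius, because chaining constants are not uniform for balls of radius $\sim\sqrt t$. Within such a ball, points must be joined by concatenated horizontal segments as in Lemma~\ref{lem:horizontal-paths}, since a single segment does not suffice in $\HH^n$.
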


In the common notation used above and in
Sections~\ref{sec:regularization}--\ref{sec:variance},
$\dist=\distcc$ on $\HH^n$; diameters and Wasserstein distances
are computed with respect to this metric.

Both statements allow arbitrary targets in $\PP(Y)$, including atomic
measures. The exponent in Theorem~\ref{thm:rcd} is optimal uniformly
over dimensions; see Section~\ref{subsec:optimality}.
Source assumptions cannot be omitted altogether: certain non-John
domains with thin passages admit no H\"older
modulus~\cite[Theorem~1.9]{LetrouitMerigot}.

\section{Heat kernel regularization of the \texorpdfstring{$c$}{c}-transform}
\label{sec:regularization}

\subsection{The regularized \texorpdfstring{$c$}{c}-transform}

Let $P_s=e^{sL}$ be the heat semigroup and $L$ be its infinitesimal generator, with positive symmetric kernel
$p_s(x,y)$ relative to $\mm$, and assume $P_s1=1$. In Euclidean space
$L=\Delta$. On an RCD space, $L$ is the Laplacian associated with the
Cheeger energy~\cite[Sections~2.2--2.3]{Jiang}. On $\HH^n$, it is the
horizontal sub-Laplacian~\cite[Definition~4.1]{EldredgePrecise}, written
explicitly in Section~\ref{sec:heis} below. For a weight
$\theta:X\to\R$ with the bounds specified in Section~\ref{subsec:extension}, define
\begin{equation}\label{eq:transform}
 \Phi_t[\theta](x)
 =-t\log G_t[\theta](x),
 \qquad G_t[\theta]=P_{t/2}(e^{\theta/t}).
\end{equation}
The heat time is $t/2$ because our cost is $\dist^2/2$. Indeed, in both
settings considered here Varadhan's formula takes the form
\begin{equation}\label{eq:varadhan}
 -t\log p_{t/2}(x,y)\longrightarrow\frac12\dist^2(x,y)
\end{equation}
uniformly on compact sets. In the RCD case this follows from the Gaussian
bounds of Jiang--Li--Zhang~\cite[Theorem~1.2]{JiangLiZhang}, as detailed
in~\cite[Lemma~2.1]{HanZhuRCD}. On Heisenberg groups it follows from
Eldredge's estimates~\cite[Corollary~4.3]{EldredgePrecise}. Thus
\eqref{eq:transform} regularizes the minimum defining $\theta^c$.

The Euclidean formula makes the relationship explicit:
\begin{equation}\label{eq:euclidean-transform}
 \Phi_t[\theta](x)
 =-t\log\int_{\R^m}
 e^{(\theta(y)-|x-y|^2/2)/t}\dd y
 +\frac{mt}{2}\log(2\pi t).
\end{equation}
Subtracting the $\rho$-mean removes the last term. Thus in Euclidean
space the normalized transform agrees with the regularization defined
using the kernel $e^{-|x-y|^2/(2t)}$.
We will also use the heat kernel semigroup formula
\[
 p_{s+r}(x,y)=\int_X p_s(x,z)p_r(z,y)\dd\mm(z),\qquad s,r>0,
\]
which is the kernel form of $P_{s+r}=P_sP_r$.

\subsection{Extension of target potentials}\label{subsec:extension}

A target may be supported on finitely many points, so an integral over $Y$
against $\mm$ could vanish. We therefore apply the heat semigroup on
$X$ to an extension of the $D$-Lipschitz target potential $\psi$.
We use
\begin{equation}\label{eq:extension}
 \begin{split}
 \bar\psi(y)&=\sup_{z\in Y}\{\psi(z)-D\dist(z,y)\},\\
 \psi^*(y)&=\bar\psi(y)-(2D+1)\dist(y,Y).
 \end{split}
\end{equation}
The first line is a Lipschitz extension~\cite{McShane}. The subtracted
function $(2D+1)\dist(y,Y)$ is independent of $\psi$; it ensures that
minimizers of $c(x,\cdot)-\psi^*$ remain in $Y$.

If $z\in Y$ is nearest to $y$ and
$s=\dist(y,Y)$, then for $x\in\overline S$,
$c(x,y)-c(x,z)\ge-Ds$ and $\bar\psi(y)\le\psi(z)+Ds$.
Consequently
\begin{equation}\label{eq:penalty-gap}
 c(x,y)-\psi^*(y)\ge c(x,z)-\psi(z)+s.
\end{equation}
It follows that the infimum over $X$ of $c(x,\cdot)-\psi^*$ is exactly
$\psi^c(x)$, and all minimizers lie in $Y$.
Moreover $\psi^*(y)\le\sup_Y\psi-(D+1)\dist(y,Y)$, so the
function $e^{\psi^*/t}$ decays exponentially in $\dist(y,Y)$ for each $t>0$.

For two target potentials $\psi_0,\psi_1$, put
\begin{equation}\label{eq:segment}
 \theta_i=\psi_i^*,\qquad
 v=\theta_1-\theta_0,\qquad
 \theta_\lambda=\theta_0+\lambda v,\quad0\le\lambda\le1.
\end{equation}
Since the same function is subtracted from both extensions,
$v=\bar\psi_1-\bar\psi_0$. Thus $v$ is bounded and continuous,
$\Lip(v)\le2D$, and
$\norm{v}_\infty\le\norm{\psi_1-\psi_0}_{L^\infty(Y)}$.
Each $\theta_\lambda$ is $(3D+1)$-Lipschitz and satisfies
$\theta_\lambda(y)\le M_0-(D+1)\dist(y,Y)$, where
$M_0=\max\{\sup_Y\psi_0,\sup_Y\psi_1\}$.
We call these weights admissible.

\subsection{Small-time limits}

For $t>0$ and an admissible weight $\theta$, define probability measures
\begin{equation}\label{eq:gibbs}
 \dd\nu_x^{t,\theta}(y)
 =\frac{e^{\theta(y)/t}p_{t/2}(x,y)}{G_t[\theta](x)}\dd\mm(y),
 \qquad
 \nu^{t,\theta}=\int_S\nu_x^{t,\theta}\dd\rho(x).
\end{equation}
The measure $\nu^{t,\theta}$ averages $\nu_x^{t,\theta}$ over starting
points distributed according to $\rho$. The following lemma adapts
\cite[Lemmas~2.12--2.13]{HanZhuRCD} to the extension \eqref{eq:extension}.

\begin{lemma}[Convergence of potentials and measures]
\label{lem:recovery}
In either geometric setting of Theorems~\ref{thm:rcd} and~\ref{thm:heis},
let $\psi:Y\to\R$ be $D$-Lipschitz and let $\psi^*$ be its extension
\eqref{eq:extension}. Then
\begin{equation}\label{eq:laplace}
 \Phi_t[\psi^*]\longrightarrow\psi^c
 \quad\text{uniformly on }\overline S.
\end{equation}
If $c(x,\cdot)-\psi$ has a unique minimizer $T(x)$ on $Y$, then
$\nu_x^{t,\psi^*}\rightharpoonup\delta_{T(x)}$.
In particular, for any $\mu\in\PP(Y)$ and any $c$-conjugate optimal pair
$(\phi_\mu,\psi_\mu)$ in \eqref{eq:duality}, the optimal map $T_\mu$
satisfies
\begin{equation}\label{eq:gibbs-limit}
 \nu_x^{t,\psi_\mu^*}\rightharpoonup\delta_{T_\mu(x)}
 \quad\text{for }\rho\text{-a.e. }x,
 \qquad
 \nu^{t,\psi_\mu^*}\rightharpoonup\mu.
\end{equation}
\end{lemma}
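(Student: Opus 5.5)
The proof is a Laplace-principle argument built on two-sided heat kernel bounds. In the RCD case we use the Gaussian estimates of Jiang--Li--Zhang; on $\HH^n$ we use Eldredge's estimates.

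\textbf{Convergence \eqref{eq:laplace}.} We need a Varadhan-type statement in a uniform form. For every $\varepsilon>0$ and every $R$ there should exist $t_0$ such that for $t<t_0$, $x\in\overline S$ and $\dist(x,y)\le R$,
\[
 e^{-(c(x,y)+\varepsilon)/t}\le p_{t/2}(x,y)\le e^{-(c(x,y)-\varepsilon)/t},
\]
together with a global Gaussian upper bound $p_{t/2}(x,y)\le C t^{-Q}e^{-\dist^2(x,y)/((2+\varepsilon')t)}$ for large distances.

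\emph{Upper bound on $\Phi_t$.} Let $x\in\overline S$ and let $y_0\in Y$ be a minimizer of $c(x,\cdot)-\psi$. Integrate only over the ball $B(y_0,\delta)$. There the $(3D+1)$-Lipschitz bound on $\psi^*$ and the continuity of $c$ give
\[
 G_t\ge \mm(B(y_0,\delta))\,e^{(\psi(y_0)-c(x,y_0)-C\delta-\varepsilon)/t}.
\]
Volume doubling (or the Haar measure on $\HH^n$) gives $\mm(B(y_0,\delta))\ge c\,\delta^{Q}$ uniformly for $y_0\in Y$. Hence $t\log\mm(B)\to0$, and $\Phi_t\le\psi^c+C\delta+\varepsilon+o(1)$.

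\emph{Lower bound on $\Phi_t$.} For the opposite inequality, bound the integrand by $e^{-(c(x,y)-\psi^*(y)-\varepsilon)/t}$ on a large ball. By \eqref{eq:penalty-gap}, $c(x,y)-\psi^*(y)\ge\psi^c(x)$. Since $\psi^*$ decays linearly in $\dist(y,Y)$ while the Gaussian tail decays quadratically, the integral over the complement of the ball is negligible. A polynomial volume growth factor such as $e^{|K|}$ disappears after multiplying by $t\log$. Together this gives $\Phi_t\ge\psi^c-\varepsilon-o(1)$, uniformly on $\overline S$.

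\textbf{Weak convergence of $\nu_x^{t,\psi^*}$.} The same estimates show that $\nu_x^{t,\psi^*}$ concentrates on the set where $c(x,y)-\psi^*(y)\le\psi^c(x)+\eta$. Indeed, the mass of the complement is bounded by
\[
 e^{(\psi^c(x)+\eta/2)/t}\int_{\{\text{gap}>\eta\}} e^{-(\psi^c(x)+\eta-\varepsilon)/t}\dots,
\]
which tends to $0$. By compactness and \eqref{eq:penalty-gap}, which forces near-minimizers to lie close to $Y$, this sublevel set shrinks to the argmin set in $Y$ as $\eta\to0$. When the minimizer $T(x)$ is unique, every neighborhood of $T(x)$ therefore eventually carries almost full mass, so $\nu_x^{t,\psi^*}\rightharpoonup\delta_{T(x)}$.

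\textbf{The limits \eqref{eq:gibbs-limit}.} For a $c$-conjugate optimal pair, $\rho$-a.e.\ $x$ has a unique minimizer. This is the existence and uniqueness of optimal maps from absolutely continuous sources: McCann's theorem in the Riemannian form, Gigli/Rajala--Sturm type results on RCD spaces, and Ambrosio--Rigot on $\HH^n$. We also need that the optimal plan is $(\mathrm{id},T_\mu)_\#\rho$, so $(T_\mu)_\#\rho=\mu$. For a bounded continuous $g$, dominated convergence gives
\[
 \int g\dd\nu^{t,\psi_\mu^*}=\int_S\int g\dd\nu_x^{t,\psi_\mu^*}\dd\rho(x)\to\int_S g(T_\mu(x))\dd\rho(x)=\int g\dd\mu .
\]

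\textbf{Main obstacle.} The hardest step is the uniformity of the kernel bounds: both the local two-sided Varadhan estimate on compact sets and the global tail bound must hold uniformly in $x\in\overline S$. On $\HH^n$, Eldredge's estimates carry polynomial prefactors that degenerate near the cut locus. These prefactors are harmless only because they are at most polynomial in $t^{-1}$ and in $\dist$, so they vanish after taking $t\log$. I would first write both settings in the common form
\[
 C^{-1}t^{-A}e^{-(1+\varepsilon)\dist^2/(2t)}\le p_{t/2}\le Ct^{-A}e^{-(1-\varepsilon)\dist^2/(2t)},
\]
valid for $t<1$ and all $x,y$, and then run the argument above once for both settings.
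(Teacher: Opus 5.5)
Your proposal is correct and follows essentially the same Laplace-principle argument as the paper. Both use two-sided Varadhan bounds on a compact neighbourhood of $Y$, a small ball around a minimizer for the lower bound on $G_t$, the gap from \eqref{eq:penalty-gap} for concentration, and $\rho$-a.e.\ uniqueness of minimizers plus dominated convergence for \eqref{eq:gibbs-limit}.

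The differences are minor. The paper controls the tail using only $P_{t/2}1=1$ and the linear decay of $\psi^*$, so it needs no global Gaussian bound or volume-growth count; note also that volume growth is exponential, not polynomial, when $K<0$, which is harmless against Gaussian decay. For the a.e.\ uniqueness, the paper cites a.e.\ single-valuedness of the $c$-superdifferential directly, whereas deducing it from existence of optimal maps, as you do, needs a short extra splitting argument.
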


\begin{proof}
Put $F(x,y)=c(x,y)-\psi^*(y)$ and $\phi=\psi^c$.
By \eqref{eq:penalty-gap}, $\min_XF(x,\cdot)=\phi(x)$ and all
minimizers belong to $Y$. Choose a compact neighborhood
$A=\{y:\dist(y,Y)\le R\}$, with $R\ge1$ large enough that
$\sup_{X\setminus A}\psi^*\le-\sup_{\overline S}\phi-1$.
The linear decay of $\psi^*$ permits this choice. Since $P_{t/2}1=1$,
\[
 \int_{X\setminus A}e^{\psi^*(y)/t}p_{t/2}(x,y)\dd\mm(y)
 \le e^{-(\sup_{\overline S}\phi+1)/t}.
\]
By \eqref{eq:varadhan}, the functions
$-t\log p_{t/2}(x,y)-\psi^*(y)$ converge to $F(x,y)$ uniformly on
$\overline S\times A$. Their difference is at most $\varepsilon$
for all sufficiently small $t$.

For each $x$, choose a minimizer $z_x\in Y$. Uniform continuity of
$F$ on $\overline S\times A$ gives $0<\delta<1$, independent of $x$,
such that $F(x,y)\le\phi(x)+\varepsilon$ on $B(z_x,\delta)$.
Full support of $\mm$ and compactness of $Y$ give
$m_\delta:=\inf_{z\in Y}\mm(B(z,\delta))>0$, by a finite covering
with balls of radius $\delta/2$. Consequently
\[
 m_\delta e^{-(\phi(x)+2\varepsilon)/t}
 \le G_t[\psi^*](x)
 \le\bigl(\mm(A)+1\bigr)e^{-(\phi(x)-\varepsilon)/t}.
\]
Taking $-t\log$, then letting $t\downarrow0$ and
$\varepsilon\downarrow0$, proves \eqref{eq:laplace} uniformly.

If $x$ has the unique minimizer $T(x)$, any open neighborhood $U$ of
$T(x)$ has a positive gap
$\inf_{A\setminus U}F(x,\cdot)-\phi(x)>0$, unless
$A\setminus U$ is empty. The same bounds, with $\varepsilon$ smaller
than one quarter of this gap and of $1$, show that
$\nu_x^{t,\psi^*}(X\setminus U)\to0$.
This proves convergence to $\delta_{T(x)}$.

Given an optimal pair $(\phi_\mu,\psi_\mu)$, define
$\phi_\mu=\psi_\mu^c$ on all of $X$, viewing $\psi_\mu$ as $-\infty$
outside $Y$ when taking this $c$-transform. Compactness of $Y$ makes this a
locally Lipschitz $c$-concave function. Its $c$-superdifferential is
single-valued $\mm$-almost everywhere, by
\cite[Theorem~1.3]{GigliRajalaSturm} in the RCD case and
\cite[Theorem~4.4]{AmbrosioRigot} on $\HH^n$.
Every minimizer belongs to this superdifferential; equality in Kantorovich duality
therefore identifies it with the optimal map $T_\mu(x)$, whose
existence follows from~\cite[Theorem~1.1]{GigliRajalaSturm}
and~\cite[Theorem~5.1]{AmbrosioRigot}, respectively.
Since $\rho\ll\mm$, the pointwise limit holds $\rho$-almost everywhere.
Dominated convergence and $(T_\mu)_\#\rho=\mu$ then give the
convergence of $\nu^{t,\psi_\mu^*}$ to $\mu$ in \eqref{eq:gibbs-limit}.
\end{proof}

At positive $t$, $\nu^{t,\psi_\mu^*}$ need not equal $\mu$.
If an atomic measure $\mu_\delta\in\PP(Y)$ is obtained by moving
each point of $\mu$ a distance at most $\delta$, then
$W_1(\mu,\mu_\delta)\le\delta$; \eqref{eq:question} then yields
$\norm{\phi_\mu-\phi_{\mu_\delta}}_{L^2(\rho)}\le C\sqrt\delta$.

\section{Variational identities and the stability estimate}\label{sec:variance}

\subsection{First and second variations}

Average the regularized transform over the source:
\[
 \K_t[\theta]=\int_S\Phi_t[\theta]\dd\rho.
\]
Along the segment \eqref{eq:segment}, write
$F_t(\lambda)=\K_t[\theta_\lambda]$ and set
\[
 u_{t,\lambda}(x)=\E_{\nu_x^{t,\theta_\lambda}}v,
 \qquad
 a_{t,\lambda}(x)=\Var_{\nu_x^{t,\theta_\lambda}}(v).
\]
Differentiating the integral defining $G_t[\theta_\lambda]$ with respect
to $\lambda$, and then differentiating $-t\log G_t[\theta_\lambda]$, yields 
\begin{equation}\label{eq:variations}
 \begin{aligned}
 \partial_\lambda\Phi_t[\theta_\lambda](x)=-u_{t,\lambda}(x),\qquad
 \partial_\lambda^2\Phi_t[\theta_\lambda](x)=-t^{-1}a_{t,\lambda}(x),
 \end{aligned}
\end{equation}
and 
\begin{equation}\label{eq:variation}
	F_t''(\lambda)=-t^{-1}\int_S a_{t,\lambda}\dd\rho. 
\end{equation}
Boundedness of $v$ justifies these differentiations at fixed $t>0$.
The abstract variation formulas are given
in~\cite[Lemmas~2.3--2.4]{KitagawaLetrouitMerigot}; their heat kernel
form is~\cite[Lemma~2.3]{HanZhuRCD}.
Thus $F_t''(\lambda)\le0$, so $F_t$ is concave in $\lambda$.
Since the computation applies to every admissible segment of weights,
$\K_t$ is concave as a functional of $\theta$.

To compare normalized source potentials, set
$\widetilde\Phi_t[\theta]=\Phi_t[\theta]-\K_t[\theta]$. Then
\begin{equation}\label{eq:centered-variation}
 \partial_\lambda\widetilde\Phi_t[\theta_\lambda]
 =-(u_{t,\lambda}-\E_\rho u_{t,\lambda}).
\end{equation}
The variance on the left below measures variation in the source variable
$x$; $a_{t,\lambda}(x)$ measures variation in $y$ for fixed $x$.
The estimate needed to relate them is
\begin{equation}\label{eq:central}
 \Var_\rho(u_{t,\lambda})
 \le\frac Ct\int_S a_{t,\lambda}\dd\rho,
\end{equation}
with $C$ independent of small $t$, of $\lambda$, and of the target measures.

\subsection{A sufficient condition for stability}
\label{subsec:reduction}

The preceding identities give the following form of the variational
reduction of~\cite[Section~3.2]{KitagawaLetrouitMerigot}.

\begin{proposition}[Stability from a variance estimate]
\label{prop:abstract-stability}
Suppose that the regularization of Section~\ref{sec:regularization} and
the limits in Lemma~\ref{lem:recovery} hold.
If \eqref{eq:central} holds with a constant $C_0$ independent of
$t$, $\lambda$ and the two target measures, then their normalized source
potentials satisfy
\[
 \norm{\phi_1-\phi_0}_{L^2(\rho)}^2
 \le C_0 I(\mu_0,\mu_1)\le 2DC_0 W_1(\mu_0,\mu_1).
\]
\end{proposition}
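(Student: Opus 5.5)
The plan is to work at fixed $t>0$ along the segment \eqref{eq:segment}, with $\theta_0=\psi_0^*$ and $\theta_1=\psi_1^*$. Combining the centered first variation \eqref{eq:centered-variation}, the variance hypothesis \eqref{eq:central} and the second variation \eqref{eq:variation} gives an $L^2$ bound on $\widetilde\Phi_t[\theta_1]-\widetilde\Phi_t[\theta_0]$ in terms of $F_t'(0)-F_t'(1)$. The final step identifies the limit of $F_t'(0)-F_t'(1)$ as $t\downarrow0$ with $I(\mu_0,\mu_1)$.

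\textbf{Step 1: the $L^2$ bound at fixed $t$.} By \eqref{eq:centered-variation} and Minkowski's (or Jensen's) inequality in $\lambda$,
\[
 \norm{\widetilde\Phi_t[\theta_1]-\widetilde\Phi_t[\theta_0]}_{L^2(\rho)}^2
 \le\int_0^1\Var_\rho(u_{t,\lambda})\dd\lambda
 \le\frac{C_0}{t}\int_0^1\!\!\int_S a_{t,\lambda}\dd\rho\dd\lambda
 =C_0\bigl(F_t'(0)-F_t'(1)\bigr).
\]
The last equality is \eqref{eq:variation} integrated over $[0,1]$. Differentiation under the integral is legitimate because $v$ is bounded.

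\textbf{Step 2: identify the right-hand side.} By \eqref{eq:variations}, $F_t'(\lambda)=-\int_S u_{t,\lambda}\dd\rho=-\int_X v\dd\nu^{t,\theta_\lambda}$. Hence
\[
 F_t'(0)-F_t'(1)=\int_X v\dd\nu^{t,\theta_1}-\int_X v\dd\nu^{t,\theta_0}.
\]
Lemma~\ref{lem:recovery} gives $\nu^{t,\psi_i^*}\rightharpoonup\mu_i$. To pass to the limit I need tightness, which comes from the proof of that lemma: the mass of $\nu_x^{t,\theta}$ outside the compact set $A$ is $O(e^{-c/t})$ uniformly in $x$. Together with $v$ being bounded and continuous, this gives
\[
 \int v\dd\nu^{t,\theta_i}\to\int_Y v\dd\mu_i.
\]
On $Y$ we have $v=\psi_1-\psi_0$, so the limit of $F_t'(0)-F_t'(1)$ is exactly $I(\mu_0,\mu_1)$.

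\textbf{Step 3: pass to $t\downarrow0$ on the left.} By \eqref{eq:laplace}, $\Phi_t[\theta_i]\to\psi_i^c=\phi_i$ uniformly on $\overline S$. Therefore $\K_t[\theta_i]\to\E_\rho\phi_i=0$, and $\widetilde\Phi_t[\theta_i]\to\phi_i$ uniformly, hence in $L^2(\rho)$. This yields $\norm{\phi_1-\phi_0}_{L^2(\rho)}^2\le C_0I(\mu_0,\mu_1)$, and \eqref{eq:pairing-w1} supplies the final factor $2D\,W_1$.

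\textbf{Main obstacle.} The delicate point is Step 2. It uses weak convergence of $\nu^{t,\psi_i^*}$ tested against $v$, which is not compactly supported. This requires the uniform exponential tail estimate outside $A$ from the proof of Lemma~\ref{lem:recovery}. It also requires that the $\rho$-a.e.\ uniqueness of minimizers hold for the particular $c$-conjugate pairs chosen. Both facts are already established in that lemma.
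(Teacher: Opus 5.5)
Your proposal is correct and follows the paper's proof essentially step for step. You integrate the centered first variation \eqref{eq:centered-variation} in $\lambda$, bound it using \eqref{eq:central} and \eqref{eq:variation}, and pass to $t\downarrow0$ via \eqref{eq:laplace} on the left and \eqref{eq:gibbs-limit} on the right. Your extra tightness argument in Step 2 is harmless but unnecessary: the convergence in Lemma~\ref{lem:recovery} is already narrow convergence, and $v$ is bounded and continuous.
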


\begin{proof}
Integration of \eqref{eq:centered-variation} and Cauchy--Schwarz in
$\lambda$ give the first inequality below. The second follows from
\eqref{eq:central} and \eqref{eq:variation}: 
\begin{equation}\label{eq:integrated-variation}
 \begin{split}
 \norm{\widetilde\Phi_t[\theta_1]
          -\widetilde\Phi_t[\theta_0]}_{L^2(\rho)}^2
 &\le\int_0^1\Var_\rho(u_{t,\lambda})\dd\lambda
 \le C_0\int_0^1 -F_t''(\lambda)\dd\lambda\\
 &=C_0\left(\E_{\nu^{t,\theta_1}}v
          -\E_{\nu^{t,\theta_0}}v\right).
 \end{split}
\end{equation}
By \eqref{eq:laplace}, the left-hand side converges to
$\norm{\phi_1-\phi_0}_{L^2(\rho)}^2$.
Since $v$ is bounded and continuous and agrees with
$\psi_1-\psi_0$ on $Y$, \eqref{eq:gibbs-limit} identifies the right-hand
limit with $C_0 I(\mu_0,\mu_1)$. This proves
\eqref{eq:coercivity}, and \eqref{eq:pairing-w1} gives
\eqref{eq:question}.
\end{proof}

\subsection{Spatial derivatives and covariance identities}

Fix $\lambda\in[0,1]$ and write $\theta=\theta_\lambda$,
$u_t=u_{t,\lambda}$ and $a_t=a_{t,\lambda}$. All constants below are
independent of $\lambda$. Define the logarithmic gradient
\[
 s_t(x,y)=\nabla_x\log p_{t/2}(x,y).
\]
Differentiating the quotient in \eqref{eq:gibbs} gives the covariance identity
\begin{equation}\label{eq:covariance}
 \begin{split}
 \nabla u_t
 &=\E_{\nuxt}[(v-u_t)(s_t-\E_{\nuxt}s_t)],\\
 |\nabla u_t|^2&\le a_t b_t,
 \qquad b_t=\E_{\nuxt}|s_t-\E_{\nuxt}s_t|^2.
 \end{split}
\end{equation}
In the RCD setting the gradient belongs to the tangent module and the
identity is understood almost everywhere; on $\HH^n$ it is the
horizontal gradient. The weak RCD formulation is established
in~\cite[Lemmas~2.4--2.5]{HanZhuRCD}.

A pointwise bound $b_t\le Ct^{-2}$ is insufficient. Combining it
with \eqref{eq:covariance} and a
Poincar\'e inequality of the form
$\Var_\rho(u_t)\le C_P\int_S|\nabla u_t|^2\dd\rho$ would give
\[
 \Var_\rho(u_t)\le\frac C{t^2}\int_Sa_t\dd\rho
 =\frac Ct\bigl(-F_t''(\lambda)\bigr).
\]
Using this in place of \eqref{eq:central} in the preceding argument
would give only
\[
 \begin{split}
 \norm{\widetilde\Phi_t[\theta_1]-\widetilde\Phi_t[\theta_0]}_{L^2(\rho)}^2\le\frac Ct\left(\E_{\nu^{t,\theta_1}}v-\E_{\nu^{t,\theta_0}}v\right).
 \end{split}
\]
The expression in parentheses converges to $I(\mu_0,\mu_1)$, which need not vanish,
while $C/t$ diverges. Thus this estimate gives no finite bound after
letting $t\downarrow0$. We therefore estimate spatial integrals of
logarithmic heat kernel derivatives to obtain \eqref{eq:central}.

In Euclidean space, if $e$ is a unit
vector, then $e\cdot s_t(x,y)=e\cdot(y-x)/t$, and direct differentiation
gives
\begin{equation}\label{eq:euclidean-score}
 t\Var_{\nuxt}(e\cdot s_t)=1-\partial_e^2\Phi_t[\theta].
\end{equation}
Integrating the second derivative along a segment gives the difference
of first derivatives at its endpoints. A uniform bound on these first
derivatives therefore bounds the integral of
$\Var_{\nuxt}(e\cdot s_t)$ by $C/t$.
On RCD spaces, the corresponding calculation uses the Laplacian of
$\log p_{t/2}$ and integration over balls. On Heisenberg groups,
we estimate horizontal second derivatives of $\log p_{t/4}$ by
integration against the measure defined in \eqref{eq:bridge}.

\subsection{Combining local estimates on John domains}\label{subsec:gluing}

Both geometric arguments lead to the same local estimate. For sufficiently
small balls $B=B(x,r)$ with a fixed enlargement $\Lambda B\subset S$,
\begin{equation}\label{eq:local}
 \int_B|u_t-(u_t)_B|^2\dd\mm
 \le\frac{Cr}{t}\int_{\Lambda B}a_t\dd\mm.
\end{equation}
Here $(u_t)_B$ denotes the $\mm$-average of $u_t$ over $B$, and
$\Lambda B=B(x,\Lambda r)$.
The radius restriction and $\Lambda$ are independent of $t$.

Choose a Whitney covering of $S$ by balls whose radii have a
sufficiently small uniform upper bound, so that \eqref{eq:local} applies
and the enlarged balls have bounded overlap. The John condition gives
chains of overlapping balls joining each ball to a fixed central ball.
The resulting Boman chain condition controls differences of averages
and gives
\[
 \int_S|u_t-(u_t)_S|^2\dd\mm
 \le C\sum_B\int_B|u_t-(u_t)_B|^2\dd\mm
 \le\frac Ct\int_Sa_t\dd\mm.
\]
The bounded radii are absorbed into $C$. The density bounds
\eqref{eq:density} then replace $\mm$ by $\rho$, and the minimizing
property of the mean gives \eqref{eq:central}. The covering construction
and the inequality are~\cite[Proposition~3.7 and Lemma~3.8]{KitagawaLetrouitMerigot},
respectively; the latter extends~\cite[Lemma~3.3]{LetrouitMerigot}.
For the underlying domain theory, see~\cite{BuckleyKoskelaLu,HajlaszKoskela}.

\section{Stability on RCD spaces}\label{sec:rcd}

We verify \eqref{eq:central} in three steps: Li--Yau estimates control
the spatial integral of $b_t$; the covariance identity gives local
$L^1$ mean oscillation; summation over spatial scales gives the required
local $L^2$ estimate.

We use the RCD heat flow introduced in Section~\ref{sec:introduction},
the Gaussian estimates
and local doubling recalled in~\cite[Section~2 and Theorem~1.2]{JiangLiZhang},
and the local Poincar\'e inequality of~\cite[Theorem~1.2]{Rajala}.

\subsection{Li--Yau estimates for the heat kernel}

Fix an admissible weight $\theta$. Write $G_t=P_{t/2}(e^{\theta/t})$.
Heat flow regularization makes $G_t$ and $P_{t/2}(ve^{\theta/t})$
locally Lipschitz, and positivity bounds $G_t$ away from zero on compact
sets. Thus their quotient $u_t$ has the regularity needed in
\eqref{eq:covariance}. The decay of the weights justifies integration
by parts using cutoff functions; see~\cite[Lemmas~2.2, 2.4 and~2.7]{HanZhuRCD}.

Computing the Laplacian of $\log G_t$ gives the identity
\begin{equation}\label{eq:trace}
 b_t=\Delta\log G_t
       -\E_{\nuxt}[\Delta_x\log p_{t/2}(x,\cdot)].
\end{equation}
On an RCD space it holds distributionally~\cite[Lemma~2.6]{HanZhuRCD}.
We estimate its second term using Jiang's Li--Yau
inequality~\cite[Theorem~1.2]{Jiang} in the form
\begin{equation}\label{eq:li-yau}
 |\nabla_x\log p_{t/2}|^2
 \le A_t\frac{\Delta_xp_{t/2}}{p_{t/2}}+B_t,
 \qquad A_t=1+O(t),\quad B_t\le C/t.
\end{equation}
For this purpose one can replace $K$ by $\min\{K,0\}$, so that
$A_t\ge1$. All bounds in this section are for sufficiently small $t$.

Put
\[
 J_t=\E_{\nuxt}|\nabla_x\log p_{t/2}|^2,
 \qquad
 \widetilde J_t=\E_{\nuxt}|\nabla_y\log p_{t/2}|^2.
\]
Symmetry and the heat equation identify $\Delta_xp$ with $\Delta_yp$.
Integration by parts in $y$, against the exponential weight, yields
\begin{equation}\label{eq:ip}
 \left|\E_{\nuxt}\frac{\Delta_xp_{t/2}}{p_{t/2}}\right|
 \le\frac{\Lip(\theta)}t\widetilde J_t^{1/2}.
\end{equation}
Applying \eqref{eq:li-yau} in the $y$ variable first gives
$\widetilde J_t\le Ct^{-2}$, and then applying it in $x$ gives
$J_t\le Ct^{-2}$. Using
$\Delta\log p=\Delta p/p-|\nabla\log p|^2$ in \eqref{eq:li-yau} gives
\begin{equation}\label{eq:trace-bound}
 \begin{split}
 -\E_{\nuxt}\Delta_x\log p_{t/2}
 \le (A_t-1)\E_{\nuxt}\frac{\Delta_xp_{t/2}}{p_{t/2}}+B_t\le C/t.
 \end{split}
\end{equation}
Since $A_t-1=O(t)$, its product with the expectation in
\eqref{eq:ip} is of order $t^{-1}$. Replacing $A_t$ by a fixed constant
greater than one would give only a bound of order $t^{-2}$.

To estimate $b_t$ on a ball, test \eqref{eq:trace} against a cutoff
function equal to one on a ball $B$ of
radius $r\le1$, supported in $2B\subset S$, with gradient bounded by
$C/r$. Since $|\nabla\log G_t|\le J_t^{1/2}\le C/t$, integration by parts
and \eqref{eq:trace-bound} give
\begin{equation}\label{eq:integrated-score}
 \int_B b_t\dd\mm\le\frac{C\mm(2B)}{rt}.
\end{equation}
This is the argument of~\cite[Lemma~2.8]{HanZhuRCD}, with integration
against $\mm$ before applying the upper density bound to pass to $\rho$.

\subsection{A local estimate for mean oscillation}

Combining \eqref{eq:covariance} with \eqref{eq:integrated-score} gives
\[
 \int_B|\nabla u_t|\dd\mm
 \le\left(\frac{C\mm(2B)}{rt}\right)^{1/2}
       \left(\int_Ba_t\dd\mm\right)^{1/2}.
\]
A local weak $(1,1)$-Poincar\'e inequality~\cite[Theorem~1.2]{Rajala}, with a fixed dilation
$\kappa\ge1$, now implies
\begin{equation}\label{eq:mean-osc}
 \frac1{\mm(B)}\int_B|u_t-(u_t)_B|\dd\mm
 \le C\sqrt{\frac rt}
       \left(\frac1{\mm(\kappa B)}
                    \int_{\kappa B}a_t\dd\mm\right)^{1/2},
\end{equation}
whenever $2\kappa B\subset S$ and $r$ is sufficiently small.
Doubling controls the volume ratios introduced by the enlarged balls.
The estimate is uniform in the admissible weight.

This is the local estimate used in~\cite[Section~2.3]{HanZhuRCD}
for $L^1$ stability. For an $L^2$
conclusion, simply interpolating an $L^1$ stability bound with a uniform
$L^\infty$ bound would yield only a $W_1^{1/4}$ rate. The stronger
information available here is that \eqref{eq:mean-osc} holds at every
small spatial scale, with a factor $\sqrt r$.
Summing these bounds over the radii $2^{-j}R$ gives an $L^2$ estimate
with the same exponent $1/2$ of $W_1$.

\subsection{An \texorpdfstring{$L^2$}{L2} estimate by summation over scales}
\label{subsec:multiscale}

\begin{lemma}[$L^2$ estimate from mean oscillation]\label{lem:multiscale}
Suppose that $\mm$ is locally doubling on $S$, with a uniform doubling
constant at the scales below. Let $u\in L^1_{\mathrm{loc}}(S,\mm)$
and $0\le h\in L^1_{\mathrm{loc}}(S,\mm)$. Assume that, for fixed
$C_*>0$, $\kappa\ge1$ and $r_0>0$,
\[
 \frac1{\mm(B')}\int_{B'}|u-u_{B'}|\dd\mm
 \le C_*\sqrt r
       \left(\frac1{\mm(\kappa B')}\int_{\kappa B'}h\dd\mm\right)^{1/2}
\]
for every ball $B'$ of radius $r\le r_0$ with $2\kappa B'\subset S$.
Then, for $B=B(z,R)$ with $4\kappa B\subset S$ and $2R\le r_0$,
\begin{equation}\label{eq:multiscale-estimate}
 \int_B|u-u_B|^2\dd\mm\le CR\int_{2\kappa B}h\dd\mm,
\end{equation}
where $C$ depends only on $C_*$, $\kappa$ and the doubling constant.
\end{lemma}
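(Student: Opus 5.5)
We plan to prove \eqref{eq:multiscale-estimate} by a telescoping argument over dyadic balls, in the spirit of the Lebesgue differentiation proof of John--Nirenberg type estimates. Fix $B=B(z,R)$ with $4\kappa B\subset S$ and $2R\le r_0$. For $x\in B$ and $j\ge0$ put $B_j(x)=B(x,2^{-j}R)$, so $B_0(x)\subset 2B$ and $2\kappa B_j(x)\subset 4\kappa B\subset S$. By the Lebesgue differentiation theorem (valid under local doubling), $u(x)=\lim_j u_{B_j(x)}$ for a.e.\ $x$. Hence
\[
 |u(x)-u_{B_0(x)}|\le\sum_{j\ge0}|u_{B_{j+1}(x)}-u_{B_j(x)}|
 \le C\sum_{j\ge0}\frac1{\mm(B_j(x))}\int_{B_j(x)}|u-u_{B_j(x)}|\dd\mm,
\]
using doubling to compare $\mm(B_{j+1})$ and $\mm(B_j)$. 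The hypothesis then bounds the $j$-th term by $C_*(2^{-j}R)^{1/2}\,(M_jh(x))^{1/2}$, where $M_jh(x)$ denotes the $\mm$-average of $h$ over $\kappa B_j(x)$.

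Square and use Cauchy--Schwarz with the geometric weights $2^{-j/2}$:
\[
 |u(x)-u_{B_0(x)}|^2\le CR\Bigl(\sum_j2^{-j/2}\Bigr)\sum_j2^{-j/2}M_jh(x)
 \le CR\sum_j2^{-j/2}M_jh(x).
\]
Integrate over $x\in B$. The key observation is that, by Fubini and doubling, $\int_B M_jh\dd\mm\le C\int_{2\kappa B}h\dd\mm$ uniformly in $j$. Indeed,
\[
 \int_B\frac{1}{\mm(\kappa B_j(x))}\int_{\kappa B_j(x)}h(y)\dd\mm(y)\dd\mm(x)
 =\int h(y)\int_{B\cap B(y,\kappa2^{-j}R)}\frac{\dd\mm(x)}{\mm(B(x,\kappa2^{-j}R))}\dd\mm(y),
\]
and $\mm(B(x,\rho))\ge c\,\mm(B(y,\rho))$ when $\dist(x,y)<\rho$. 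The inner integral is therefore bounded by a constant, and $y$ ranges in $(1+\kappa)B\subset2\kappa B$ when $\kappa\ge1$. Summing the geometric series gives
\[
 \int_B|u(x)-u_{B_0(x)}|^2\dd\mm(x)\le CR\int_{2\kappa B}h\dd\mm.
\]
Here one must avoid a naive maximal-function bound, which would fail from $L^1$ to $L^1$; it is the scale-wise Fubini with summable weights $2^{-j/2}$ that handles this. This is the point of the $\sqrt r$ factor and the main step.

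It remains to replace the varying center $u_{B_0(x)}$ by $u_B$. Since $B_0(x)\subset2B$ with comparable measure, applying the hypothesis to the ball $2B$ (radius $2R\le r_0$, with $2\kappa\cdot2B\subset S$) gives
\[
 |u_{B_0(x)}-u_{2B}|\le C\frac1{\mm(2B)}\int_{2B}|u-u_{2B}|\dd\mm\le C\sqrt R\Bigl(\frac1{\mm(2\kappa B)}\int_{2\kappa B}h\dd\mm\Bigr)^{1/2}.
\]
Squaring and integrating over $B$ gives at most $CR\int_{2\kappa B}h\dd\mm$, using $\mm(B)\le\mm(2\kappa B)$. Finally, the minimizing property of the mean in $L^2$ gives $\int_B|u-u_B|^2\le\int_B|u-u_{2B}|^2$, and the triangle inequality concludes \eqref{eq:multiscale-estimate}. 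The only delicate point is the bookkeeping of enlargements so that every ball used stays inside the region where the hypothesis holds. This works because all radii are at most $2R\le r_0$ and all centers lie in $B$, so every enlarged ball sits inside $4\kappa B\subset S$.
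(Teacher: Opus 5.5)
Your proposal is correct and follows the paper's proof essentially step for step: telescoping over the dyadic balls $B(x,2^{-j}R)$ at a Lebesgue point, the fixed-scale Fubini/doubling bound placing $y$ in $2\kappa B$, comparison of $u_{B(x,R)}$ with $u_{2B}$ via the hypothesis at radius $2R$, and the minimizing property of the mean. The only cosmetic difference is that you square pointwise using Cauchy--Schwarz with weights $2^{-j/2}$, whereas the paper takes the $L^2(B)$ norm and applies Minkowski's inequality across scales; both simply exploit the summability of $\sqrt{r_j}$.
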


\begin{proof}
Fix a Lebesgue point $x\in B$ and put $r_j=2^{-j}R$ and
$B_j=B(x,r_j)$. Since $B_{j+1}\subset B_j$ and
$\mm(B_j)\le C_{\mathrm d}\mm(B_{j+1})$ by doubling, the hypothesis gives
\[
 \begin{split}
 |u_{B_{j+1}}-u_{B_j}|
 &\le\frac1{\mm(B_{j+1})}\int_{B_{j+1}}|u-u_{B_j}|\dd\mm\\
 &\le\frac{C_{\mathrm d}}{\mm(B_j)}\int_{B_j}|u-u_{B_j}|\dd\mm\\
 &\le C_{\mathrm d}C_*\sqrt{r_j}
       \left(\frac1{\mm(\kappa B_j)}\int_{\kappa B_j}h\dd\mm\right)^{1/2}.
 \end{split}
\]
For each integer $J\ge1$, the triangle inequality gives
\[
 |u_{B_J}-u_{B_0}|
 \le\sum_{j=0}^{J-1}|u_{B_{j+1}}-u_{B_j}|.
\]
Lebesgue differentiation gives $u_{B_J}\to u(x)$ for almost every
$x\in B$. Letting $J\to\infty$ and using the preceding bound yields
\begin{equation}\label{eq:telescoping}
 \begin{split}
 &|u(x)-u_{B(x,R)}|\\
 &\quad\le C\sum_{j\ge0}\sqrt{r_j}
 \left(\frac1{\mm(B(x,\kappa r_j))}
           \int_{B(x,\kappa r_j)}h\dd\mm\right)^{1/2}.
 \end{split}
\end{equation}
The required balls lie in $S$ because $4\kappa B\subset S$.
Since $B(x,R)\subset2B$ and their volumes are comparable,
$|u_{B(x,R)}-u_{2B}|$ is bounded by a constant times the mean
oscillation on $2B$. The hypothesis at radius $2R$ therefore controls
this initial difference uniformly for $x\in B$.

For any fixed $s\le\kappa R$, Fubini's theorem and doubling give
\begin{equation}\label{eq:fixed-scale}
 \int_B\frac1{\mm(B(x,s))}\int_{B(x,s)}h(y)\dd\mm(y)\dd\mm(x)
 \le C\int_{2\kappa B}h\dd\mm.
\end{equation}
Indeed, if $\dist(x,y)<s$, the volumes of $B(x,s)$ and $B(y,s)$ are
comparable. Integration in $x\in B\cap B(y,s)$ gives a bounded
factor, and every such $y$ lies in $2\kappa B$.
Taking the $L^2(B)$ norm in \eqref{eq:telescoping} and applying
\eqref{eq:fixed-scale} at each scale yields
\[
 \norm{u-u_{B(\cdot,R)}}_{L^2(B,\mm)}
 \le C\sum_{j\ge0}\sqrt{r_j}
             \left(\int_{2\kappa B}h\dd\mm\right)^{1/2}.
\]
The sum of $\sqrt{r_j}$ is $C\sqrt R$.
The $L^2(B)$ norm of $u_{B(\cdot,R)}-u_{2B}$ has the same bound by
the preceding comparison. Since $u_B$ minimizes the squared error
on $B$, this proves \eqref{eq:multiscale-estimate}.
\end{proof}

Apply the lemma with $u=u_t$ and $h=a_t/t$. Equation~\eqref{eq:mean-osc}
then gives
\begin{equation}\label{eq:rcd-local}
 \int_B|u_t-(u_t)_B|^2\dd\mm
 \le\frac{CR}{t}\int_{2\kappa B}a_t\dd\mm.
\end{equation}
Equation~\eqref{eq:rcd-local} gives \eqref{eq:local}, with
$\Lambda=4\kappa$ if needed for the interior condition.
Section~\ref{subsec:gluing} gives \eqref{eq:central}, and
Proposition~\ref{prop:abstract-stability} completes the proof of
Theorem~\ref{thm:rcd}.

\subsection{Optimality of the dimension-independent exponent}
\label{subsec:optimality}

The Euclidean example in~\cite[Remark~3.4]{KitagawaLetrouitMerigot}
shows that no exponent larger than $1/2$ can hold throughout the
finite-dimensional RCD class. Let $\rho$ be uniform on the unit ball
in $\R^d$ and consider the convex Brenier potentials
$q_0(x)=|x|$ and $q_\varepsilon(x)=\max\{|x|,\varepsilon\}$.
The pushforwards $(\nabla q_0)_\#\rho$ and
$(\nabla q_\varepsilon)_\#\rho$ are, respectively, the uniform probability
measure $\mu_0$ on the unit sphere and
$\mu_\varepsilon=(1-\varepsilon^d)\mu_0+\varepsilon^d\delta_0$.
The normalized Kantorovich potentials, obtained by centering
$|x|^2/2-q_i(x)$, satisfy
\[
 W_1(\mu_0,\mu_\varepsilon)=\varepsilon^d,
 \qquad
 \norm{\phi_{\mu_\varepsilon}-\phi_{\mu_0}}_{L^2(\rho)}
 \asymp_d\varepsilon^{(d+2)/2}.
\]
An estimate with exponent $\alpha$ therefore requires
$\alpha\le1/2+1/d$. Allowing $d$ to vary rules out every $\alpha>1/2$,
even when the constant depends on $d$.

\section{Stability on Heisenberg groups}\label{sec:heis}

The proof of Theorem~\ref{thm:heis} requires an integral estimate of
order $t^{-1}$, matching the factor $t^{-1}$ in the second variation
\eqref{eq:variations}. The Euclidean identity \eqref{eq:euclidean-score}
suggests how to obtain it. For an admissible weight $\theta$ and a unit
left-invariant horizontal field $V$, set
$b_{t,V}(x)=\Var_{\nuxt}(V_x\log p_{t/2})$.
We seek bounds of the form
\[
 t b_{t,V}\le C-V^2\Phi_t[\theta],\qquad
 |V\Phi_t[\theta]|\le C.
\]
For a horizontal segment $\gamma$ of bounded length, integration gives
\[
 \int_\gamma b_{t,V}\dd s\le C/t,
\]
because the endpoint values of $V\Phi_t$ are bounded.

The difficulty is to prove these bounds uniformly when the target lies
in the cut locus of the source point. Although $p_{t/2}$ is smooth for
every $t>0$, asymptotic formulas involving second derivatives of
$\distcc^2$ do not apply there. The semigroup formula introduces a
point at time $t/4$. For distinct endpoints, the required derivative
estimates apply near the geodesic midpoints, while the density outside
a fixed neighborhood of these midpoints is exponentially small as
$t\downarrow0$. When $\distcc(x,y)\le\sqrt t/2$, dilation instead
reduces the estimate to a fixed positive time.

\subsection{Geometry and heat kernel estimates}\label{subsec:heis-heat}

Write $\HH^n=\mathbb C^n\times\R$, with multiplication
\[
 [\zeta,\tau][\zeta',\tau']
 =\left[\zeta+\zeta',\tau+\tau'
       +2\operatorname{Im}\sum_j\zeta_j\overline{\zeta'_j}\right].
\]
For $\zeta=\xi+i\eta$, the horizontal fields and sub-Laplacian are
\[
 X_j=\partial_{\xi_j}+2\eta_j\partial_\tau,
 \quad Y_j=\partial_{\eta_j}-2\xi_j\partial_\tau,
 \quad L=\sum_{j=1}^n(X_j^2+Y_j^2).
\]
Let $\distcc$ be the Carnot--Carath\'eodory distance making the fields
$X_j,Y_j$ orthonormal. We use Lebesgue Haar measure $\mm$, which is
invariant under both left and right translations.
All metric balls and Lipschitz constants below refer to $\distcc$.
The dilations $\delta_R[\zeta,\tau]=[R\zeta,R^2\tau]$ multiply distance
by $R$ and volume by $R^Q$, where $Q=2n+2$ is the homogeneous dimension.
The cut locus of the identity
is the punctured central axis; see~\cite[Sections~2--3]{AmbrosioRigot}.
For background on sub-Riemannian transport, see~\cite{Rifford}.
The group is complete and its horizontal distribution is contact, so
nonconstant minimizing geodesics are strongly normal: none of their
nontrivial subsegments is abnormal; see~\cite[Chapter~12]{AgrachevBarilariBoscain}.

Write $p_s(g)=p_s(e,g)$. Left invariance, symmetry and dilation give
\[
 p_s(x,y)=p_s(x^{-1}y)=p_s(y,x),\qquad
 p_s(\delta_Rg)=R^{-Q}p_{s/R^2}(g).
\]
For a horizontal unit vector $\omega$ at the identity, write $V$ and
$\widehat V$ for the left- and right-invariant fields generated by $\omega$.
Their flows through $g$ are $g[h\omega,0]$ and $[h\omega,0]g$.
In particular, differentiation in the starting point of $p_s(x^{-1}z)$
uses $-\widehat V$ in the relative variable $x^{-1}z$.

We use three heat kernel estimates. With $r=\distcc(e,g)$, the bound
of Eldredge~\cite[Corollary~4.3]{EldredgePrecise} implies
\begin{equation}\label{eq:heis-gaussian}
 \frac{C^{-1}s^{-Q/2}e^{-r^2/(4s)}}{(1+r/\sqrt s)^M}
 \le p_s(g)\le Cs^{-Q/2}(1+r/\sqrt s)^M e^{-r^2/(4s)}.
\end{equation}
The positive integer $M$ depends only on $n$ and may be enlarged in
subsequent bounds. Keeping the exact Gaussian exponent is essential when
forming the ratio of three kernels in \eqref{eq:bridge} below.
Second, the logarithmic gradient estimate
\cite[Theorem~4.4]{EldredgePrecise}, dilation and inversion symmetry give
\begin{equation}\label{eq:heis-first-derivative}
 s|\widehat V\log p_s(g)|\le C(\sqrt s+r).
\end{equation}
Third, Theorem~5.10 of Neel--Sacchelli~\cite{NeelSacchelli}, with
derivative order two, gives
$|\widehat V^2\log p_s(g)|\le C_K/s^2$ on compact sets $K$ away
from the identity. Completeness implies the localization condition
in~\cite[Definition~1.1]{NeelSacchelli}; the remaining hypotheses follow
from strong normality of the minimizing geodesics and the bounded
$C^1$ norms of the invariant fields on compact sets.
On compact sets avoiding both the cut locus of $e$ and $e$ itself,
Theorem~5.3 of the same reference gives uniform convergence of
$s\widehat V^2\log p_s$ to
$-\widehat V^2\distcc^2(e,\cdot)/4$, hence the stronger bound $C_K/s$.
This stronger bound requires $K$ to avoid the entire central axis,
including $e$.
Smoothness extends the bounds to
$0<s\le1$, uniformly in the unit vector $\omega$ by expansion in a
fixed horizontal basis. The asymptotics away from the cut locus originate
in~\cite{BenArous}; see also~\cite{BarilariBoscainNeel} at the cut locus.

The estimate on compact sets and dilation give a bound valid on all of $\HH^n$:
\begin{equation}\label{eq:heis-rough-second}
 s|\widehat V^2\log p_s(g)|\le C(1+r^2/s).
\end{equation}
Indeed, set $w=\delta_{s^{-1/2}}g$. The left side is
$|\widehat V^2\log p_1(w)|$, bounded on $\{\distcc(e,w)\le2\}$ by
smoothness and positivity. If $R=\distcc(e,w)>2$, put
$q=\delta_{1/R}w$ and $\varsigma=R^{-2}$. The unit sphere avoids $e$,
so the $C_K/s^2$ bound of~\cite[Theorem~5.10]{NeelSacchelli} applies
there, including at its points on the central axis. Dilation gives
\[
 |\widehat V^2\log p_1(w)|
 =R^{-2}|\widehat V^2\log p_\varsigma(q)|
 \le CR^{-2}\varsigma^{-2}=CR^2.
\]
Thus \eqref{eq:heis-rough-second} also covers the diagonal.

\subsection{Integral estimates for logarithmic derivatives}
\label{subsec:heis-integrals}

The semigroup identity
\[
 p_{2s}(x,y)=\int_{\HH^n}p_s(x,z)p_s(z,y)\dd\mm(z)
\]
leads to the probability measure
\begin{equation}\label{eq:bridge}
 \dd\beta^s_{x,y}(z)
 =\frac{p_s(x,z)p_s(z,y)}{p_{2s}(x,y)}\dd\mm(z).
\end{equation}
It is the distribution at time $s$ of the diffusion with
generator $L$, conditioned to start at $x$ at time $0$ and to end at $y$
at time $2s$. This conditioned process is a diffusion bridge;
see~\cite[Section~6]{NeelSacchelli}.

The relation to geodesic midpoints follows from the Gaussian factors
in \eqref{eq:heis-gaussian}. Their quotient in \eqref{eq:bridge} is
\[
 \exp\left(-\frac{\mathcal E_{x,y}(z)}{4s}\right),\qquad
 \mathcal E_{x,y}(z)
 =\distcc^2(x,z)+\distcc^2(z,y)-\tfrac12\distcc^2(x,y).
\]
The triangle inequality gives $\mathcal E_{x,y}\ge0$, with equality
exactly at midpoints of minimizing geodesics from $x$ to $y$.
For $x\ne y$, such a midpoint $z$ is distinct from both endpoints
and belongs to neither endpoint's cut locus: it lies strictly before
the cut point when the geodesic is followed from either endpoint;
see~\cite[Lemma~3.3]{NeelSacchelli}.
Near these midpoints we use the derivative estimates of
Section~\ref{subsec:heis-heat}. Outside a fixed neighborhood of the
midpoint set, the positive lower bound for $\mathcal E_{x,y}$ gives
exponential decay, which absorbs the negative powers of $s$ in
\eqref{eq:heis-gaussian} and \eqref{eq:heis-rough-second}.

Put $H_t(x,z)=-t\log p_{t/4}(x,z)$.

\begin{proposition}\label{prop:bridge-bounds}
For all $x,y\in\HH^n$, $0<t\le1$ and unit left-invariant horizontal
fields $V$,
\begin{equation}\label{eq:bridge-bounds}
 \begin{aligned}
 \E_{\beta^{t/4}_{x,y}}|V_xH_t(x,Z)|
     &\le C_n(\distcc(x,y)+\sqrt t),\\
 \E_{\beta^{t/4}_{x,y}}[(V_x^2H_t(x,Z))_+]&\le C_n.
 \end{aligned}
\end{equation}
Here $q_+=\max\{q,0\}$. The constants are uniform even when $y$ is
in the cut locus of $x$.
\end{proposition}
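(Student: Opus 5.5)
The proof works with the relative variable $g=x^{-1}z$ and uses the three heat kernel estimates of Section~\ref{subsec:heis-heat} together with the Gaussian structure of $\beta^{t/4}_{x,y}$. Set $s=t/4$, $a=\distcc(x,z)$, $b=\distcc(z,y)$, $d=\distcc(x,y)$. Since $H_t(x,z)=-t\log p_s(x^{-1}z)$, and differentiation in the starting point uses $-\widehat V$ in $g$, we get
\[
 V_xH_t(x,z)=t\,(\widehat V\log p_s)(g),\qquad V_x^2H_t(x,z)=-t\,(\widehat V^2\log p_s)(g).
\]
By left invariance of $\mm$, $\distcc$ and the kernel, we may take $x=e$.

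\emph{First bound.} By \eqref{eq:heis-first-derivative}, $|V_xH_t|\le 4C(\sqrt s+a)$. It therefore suffices to show $\E_\beta a\le C(d+\sqrt t)$. On $\{a<2d\}$ the integrand is at most $2d$. On $\{a\ge2d\}$ we have $b\ge a-d\ge a/2$, so
\[
 \mathcal E_{x,y}(z)\ge a^2+b^2-\tfrac12 d^2\ge a^2/2,
\]
using $d\le a/2$. We insert \eqref{eq:heis-gaussian}: upper bounds for the two numerator kernels and the lower bound for $p_{2s}(x,y)$. The density of $\beta$ is then at most
\[
 Cs^{-Q/2}(1+a/\sqrt s)^{M}(1+b/\sqrt s)^{M}(1+d/\sqrt s)^{M}e^{-\mathcal E/(4s)}.
\]
On $\{a\ge 2d\}$ all three polynomial factors are dominated by $(1+a/\sqrt s)^{3M}$, up to $b\le a+d\le 2a$. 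Integrating $a\,s^{-Q/2}(1+a/\sqrt s)^{3M}e^{-a^2/(8s)}$ over $z$ in the scale $\sqrt s$, with volume growth $R^Q$, gives $C\sqrt s$. The same computation with $a^2/s$ in place of $a$ gives the auxiliary bound
\[
 \E_\beta\bigl[a^2/s\,;\,a\ge 2d\bigr]\le C,
\]
which is used below.

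\emph{Second bound, near-diagonal case $d\le\sqrt t$.} Here \eqref{eq:heis-rough-second} gives $|V_x^2H_t|\le C(1+a^2/s)$. On $\{a<2d\}$ this is at most $C(1+16)$, since $a^2/s\le 4d^2/s\le 16$. On $\{a\ge2d\}$ the auxiliary tail bound applies. So no cut-locus information is needed in this case.

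\emph{Second bound, case $d>\sqrt t$.} Apply the dilation $\delta_{1/d}$ and set $w=\delta_{1/d}g$, $y'=\delta_{1/d}y$ with $\distcc(e,y')=1$, and $s'=s/d^2<1/4$. The scaling $p_s(\delta_Rg)=R^{-Q}p_{s/R^2}(g)$ and homogeneity of $\widehat V$ give
\[
 -t\,\widehat V^2\log p_s(g)=-4s'\,\widehat V^2\log p_{s'}(w).
\]
The bridge measure is carried to $\beta^{s'}_{e,y'}$. So it suffices to prove
\[
 \E_{\beta^{s'}_{e,y'}}\bigl[s'|\widehat V^2\log p_{s'}(W)|\bigr]\le C,
\]
uniformly over $y'$ in the unit sphere and $0<s'\le1/4$.

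Let $\mathcal M$ be the set of midpoints of minimizing geodesics from $e$ to points of the unit sphere. By Arzel\`a--Ascoli, $\mathcal M$ is compact. By \cite[Lemma~3.3]{NeelSacchelli} each midpoint is distinct from $e$ and lies before the cut point, so $\mathcal M$ misses the whole central axis. Fix a compact neighborhood $U$ of $\mathcal M$ still disjoint from the axis. On $U$, Theorem~5.3 of \cite{NeelSacchelli} gives $s'|\widehat V^2\log p_{s'}|\le C_U$ for $s'\le 1/4$. Off $U$, continuity of $(y',z)\mapsto\mathcal E_{e,y'}(z)$, its coercivity in $z$, and compactness of the sphere give $\mathcal E\ge\varepsilon_0>0$ uniformly; far away, $\mathcal E\gtrsim\distcc(e,z)^2$. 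The density bound above then yields a factor $e^{-\varepsilon_0/(8s')}$ times polynomials, uniformly in $y'$. This absorbs $s'^{-Q/2}$ and the rough bound \eqref{eq:heis-rough-second}, i.e.\ $C(1+\distcc(e,z)^2/s')$. Summing the two regions gives the bound. Uniformity in $\omega$ follows by expanding $\widehat V$ in a fixed horizontal basis.

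\emph{Main obstacle.} The delicate step is this localization. One must check that the midpoint set over the whole unit sphere stays a positive distance from the central axis, including for $y'$ on the axis, where midpoints form a circle. One must also check that the gap $\varepsilon_0$ and the Neel--Sacchelli constants are genuinely uniform in $y'$. I would argue this by contradiction and compactness: a sequence of midpoints approaching the axis would converge to a midpoint on the axis, which Lemma~3.3 excludes. Only the positive part of $V_x^2H_t$ is required, but the argument above controls the absolute value, which is stronger.
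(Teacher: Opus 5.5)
Your proof is correct and follows essentially the same route as the paper: left translation to $x=e$, dilation to a target at unit distance, the Gaussian bound on the bridge density with energy $\mathcal E$, localization near geodesic midpoints via \cite[Lemma~3.3 and Theorem~5.3]{NeelSacchelli}, and exponential decay combined with \eqref{eq:heis-rough-second} away from those midpoints. The differences are cosmetic. You treat the first estimate and the near-diagonal case directly at scale $\sqrt s$ by splitting on $\{a<2d\}$, where the paper uses the uniform normalization $R=\max\{\distcc(e,g),\sqrt t/2\}$. You also obtain the energy gap from a fixed neighborhood $U$ of the midpoint set by compactness, rather than from the explicit sets $A_g$ with gap $2(m-1/2)^2$.
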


\begin{proof}
\emph{Normalization.}
By left translation take $x=e$, write $g=x^{-1}y$, and set
\[
 R=\max\{\distcc(e,g),\sqrt{t}/2\},\qquad
 \sigma=\frac{t}{4R^2},\qquad \bar g=\delta_{1/R}g.
\]
Either $\sigma=1$ and $\distcc(e,\bar g)\le1$, or
$0<\sigma\le1$ and $\distcc(e,\bar g)=1$.
Since $\delta_R$ is a group automorphism, the change of variables
$z=\delta_Ru$ and heat kernel scaling transform the density into
\[
 \frac{R^{-Q}p_\sigma(u)\,R^{-Q}p_\sigma(u,\bar g)}
      {R^{-Q}p_{2\sigma}(\bar g)}R^Q\dd\mm(u)
 =\dd\beta^\sigma_{e,\bar g}(u).
\]
Thus
$(\delta_{1/R})_\#\beta^{t/4}_{e,g}=\beta^\sigma_{e,\bar g}$.
At $z=\delta_R u$, differentiation in the starting point gives
\begin{equation}\label{eq:bridge-derivative-scaling}
 \begin{split}
 (V_xH_t)(e,\delta_R u)&=4R\sigma\widehat V\log p_\sigma(u),\\
 (V_x^2H_t)(e,\delta_R u)&=-4\sigma\widehat V^2\log p_\sigma(u).
 \end{split}
\end{equation}
This follows by differentiating
$-t\log p_{t/4}([-h\omega,0]\delta_R u)$ once or twice at $h=0$.

\emph{Density and moment bounds.}
For $\distcc(e,\bar g)=1$, put $r_1=\distcc(e,u)$,
$r_2=\distcc(u,\bar g)$ and $\mathcal E_{\bar g}(u)=r_1^2+r_2^2-1/2$.
Dividing the two upper bounds in \eqref{eq:heis-gaussian} by the lower
bound at time $2\sigma$ gives
\begin{equation}\label{eq:bridge-energy}
 \frac{\dd\beta^\sigma_{e,\bar g}}{\dd\mm}(u)
 \le C\sigma^{-M}(1+r_1+r_2)^M e^{-\mathcal E_{\bar g}(u)/(4\sigma)}.
\end{equation}
The triangle inequality gives $\mathcal E_{\bar g}(u)\ge0$, with equality precisely at
midpoints of minimizing geodesics from $e$ to $\bar g$. For $r_1\ge2$, $r_2\ge r_1-1$ gives $\mathcal E_{\bar g}(u)\ge r_1^2$.
Since $\mm(B(e,R))=\mm(B(e,1))R^Q$, integration over the annuli
$k\le r_1<k+1$ yields, for a fixed integer $N$,
\begin{equation}\label{eq:bridge-moment}
 \E_{\beta^\sigma_{e,\bar g}}r_1
 \le2+C\sigma^{-M}\sum_{k=2}^\infty(k+1)^N e^{-k^2/(4\sigma)}
 \le C.
\end{equation}
For the last bound, split the exponential into two equal factors;
one absorbs $\sigma^{-M}$ uniformly for $k\ge2$, and the other gives
a summable Gaussian series. If $\sigma=1$ and $\distcc(e,\bar g)\le1$,
positivity of $p_2$ on the unit ball and \eqref{eq:heis-gaussian} give
instead
\[
 \frac{\dd\beta^1_{e,\bar g}}{\dd\mm}(u)
 \le C(1+r_1)^M e^{-r_1^2/4}.
\]
All polynomial moments are then bounded. In either case,
\eqref{eq:heis-first-derivative} and
\eqref{eq:bridge-derivative-scaling} prove the first estimate in
\eqref{eq:bridge-bounds}, since $R\le\distcc(x,y)+\sqrt t/2$.
The same fixed-time density bound and \eqref{eq:heis-rough-second}
prove the second estimate when $\sigma=1$.

\emph{Estimate for the second derivative.}
It remains to treat $\distcc(e,\bar g)=1$ and $0<\sigma<1$;
write $g=\bar g$ in this part of the proof.
The set
\[
 \mathcal M=\{(g,u):\distcc(e,g)=1,\
              \distcc(e,u)=\distcc(u,g)=1/2\}
\]
is compact. For every $(g,u)\in\mathcal M$, the midpoint $u$ differs
from $e$ and does not belong to the cut locus of $e$;
see~\cite[Lemma~3.3]{NeelSacchelli}. Compactness therefore gives
$\varepsilon>0$ such that all points $u$ satisfying
\[
 u\in A_g:=\{r_1\le(1+\varepsilon)/2,\ r_2\le(1+\varepsilon)/2\}
\]
lie in a fixed compact set disjoint from the central axis.
Indeed, otherwise a sequence with $\varepsilon\downarrow0$ would
converge to $\mathcal M$ and meet that axis. The bound
from~\cite[Theorem~5.3]{NeelSacchelli}, extended to positive times by
smoothness as in Section~\ref{subsec:heis-heat}, gives
\[
 \sigma|\widehat V^2\log p_\sigma(u)|\le C
 \quad(u\in A_g,\ 0<\sigma\le1).
\]
Its integral over $A_g$ is bounded because $\beta^\sigma_{e,g}$ is a probability measure.

On $A_g^c$, let $m=\max\{r_1,r_2\}>(1+\varepsilon)/2$.
Since $r_1+r_2\ge1$ and $|r_1-r_2|\le1$, we have
$\min\{r_1,r_2\}\ge\max\{1-m,m-1\}=|1-m|$, so
\[
 \mathcal E_g(u)\ge m^2+(1-m)^2-1/2
   =2(m-1/2)^2\ge\varepsilon^2/2.
\]
Combine this gap with \eqref{eq:heis-rough-second} and
\eqref{eq:bridge-energy}. On $\{r_1<2\}\cap A_g^c$ the integral is
bounded by $C\sigma^{-M-1}e^{-\varepsilon^2/(8\sigma)}$.
On $\{r_1\ge2\}$, the bound $\mathcal E_g(u)\ge r_1^2$ gives
\[
 \begin{split}
 &\int_{A_g^c}\sigma|\widehat V^2\log p_\sigma(u)|
                      \dd\beta^\sigma_{e,g}(u)\\
 &\qquad\le C\sigma^{-M-1}\left(
 e^{-\varepsilon^2/(8\sigma)}
 +\sum_{k=2}^\infty(k+1)^{N+2}e^{-k^2/(4\sigma)}\right)\le C.
 \end{split}
\]
The exponential factors absorb the negative powers of $\sigma$, as in
\eqref{eq:bridge-moment}. Together with the bound on $A_g$ and
\eqref{eq:bridge-derivative-scaling}, this proves the second estimate.
\end{proof}

\subsection{Variance bounds for logarithmic derivatives}
\label{subsec:heis-variance}

Proposition~\ref{prop:bridge-bounds} bounds integrals of derivatives of
$\log p_{t/4}$, whereas $b_{t,V}$ involves $\log p_{t/2}$.
To relate them, fix an admissible weight $\theta=\theta_\lambda$ and define the joint
probability measure of the positions $z$ and $y$ at times $t/4$ and
$t/2$, respectively, by
\begin{equation}\label{eq:joint}
 \dd\Pi_x^{t,\theta}(z,y)
 =\frac{e^{\theta(y)/t}p_{t/4}(x,z)p_{t/4}(z,y)}{G_t[\theta](x)}
       \dd\mm(z)\dd\mm(y).
\end{equation}
Its marginals are $\nuxt$ in $y$ and
\[
 \dd\nuhat(z)
 =\frac{p_{t/4}(x,z)P_{t/4}(e^{\theta/t})(z)}{G_t[\theta](x)}\dd\mm(z).
\]
Conditioned on $y$, the position $z$ at time $t/4$ has distribution
$\beta^{t/4}_{x,y}$.
Differentiating the semigroup formula in $x$ therefore gives
\[
 V_x\log p_{t/2}(x,y)
 =\E_{\beta^{t/4}_{x,y}}[V_x\log p_{t/4}(x,Z)].
\]
Under $\Pi_x^{t,\theta}$, the right-hand side is the conditional
expectation given $y$. Since conditional expectation does not
increase variance,
\begin{equation}\label{eq:contraction}
 \Var_{\nuxt}(V_x\log p_{t/2})
 \le\Var_{\nuhat}(V_x\log p_{t/4}).
\end{equation}
All these differentiations are justified at fixed $t>0$: admissible
weights are bounded above, so $P_{t/4}(e^{\theta/t})$ is bounded; the
first two derivatives of $p_{t/4}$ are bounded by a polynomial times
a Gaussian, by \eqref{eq:heis-gaussian}--\eqref{eq:heis-rough-second}.
This gives locally uniform integrable bounds in $x$ and square
integrability of the logarithmic derivatives under $\nuhat$.

We also need a first moment bound uniform as $t\downarrow0$:
\begin{equation}\label{eq:weighted-first-moment}
 \int\distcc(x,y)\dd\nuxt(y)\le C(n,D).
\end{equation}
Put $L_0=3D+1$, so $\Lip(\theta)\le L_0$.
Integration on $B(x,\sqrt t)$ and heat kernel scaling give
\[
 G_t[\theta](x)\ge c_n e^{\theta(x)/t-L_0/\sqrt t}
                   \ge c_n e^{(\theta(x)-L_0)/t}\quad(0<t\le1).
\]
Since $\theta(y)-\theta(x)\le L_0r$ for $r=\distcc(x,y)$,
\eqref{eq:heis-gaussian} at time $s=t/2$ implies, with a fixed exponent $N_0$,
\[
 \frac{\dd\nuxt}{\dd\mm}(y)
 \le Ct^{-N_0}(1+r)^M
          \exp\left(\frac{L_0r+L_0-r^2/2}{t}\right).
\]
Choose $R_0=R_0(D)\ge2$ so that
$L_0r+L_0-r^2/2\le-r^2/4$ for $r\ge R_0$.
The integral of $r$ inside this ball is at most $R_0$. Outside, the
same annular summation as in \eqref{eq:bridge-moment} bounds
$t^{-N_0}\int_{r\ge R_0}r(1+r)^M e^{-r^2/(4t)}\dd\mm$ uniformly.
This proves \eqref{eq:weighted-first-moment}.

In the density of $\nuhat$, the factor $P_{t/4}(e^{\theta/t})(z)$ is
independent of $x$. Differentiating the logarithm of its normalizing
integral gives
\begin{equation}\label{eq:directional-identity}
 \begin{split}
 V\Phi_t[\theta]&=\E_{\nuhat}V_xH_t,\\
 V^2\Phi_t[\theta]
 &=\E_{\nuhat}V_x^2H_t-t^{-1}\Var_{\nuhat}(V_xH_t).
 \end{split}
\end{equation}
For the second line, the derivative of the probability density is its
product with $-t^{-1}(V_xH_t-\E_{\nuhat}V_xH_t)$.
These identities imply
\begin{equation}\label{eq:directional-bound}
 t\Var_{\nuxt}(V_x\log p_{t/2})
 \le C-V^2\Phi_t[\theta],\qquad |V\Phi_t[\theta]|\le C.
\end{equation}
Indeed, $V_xH_t=-tV_x\log p_{t/4}$, so \eqref{eq:contraction}
and \eqref{eq:directional-identity} give
\[
 \begin{aligned}
 t\Var_{\nuxt}(V_x\log p_{t/2})
 &\le t^{-1}\Var_{\nuhat}(V_xH_t)\\
 &=\E_{\nuhat}V_x^2H_t-V^2\Phi_t[\theta]\\
 &\le\int\E_{\beta^{t/4}_{x,y}}[(V_x^2H_t)_+]\dd\nuxt(y)
          -V^2\Phi_t[\theta]\\
 &\le C_n-V^2\Phi_t[\theta].
 \end{aligned}
\]
The last inequality is \eqref{eq:bridge-bounds}. The bound for
$|V\Phi_t[\theta]|$ follows from the first identity in
\eqref{eq:directional-identity}, by averaging the first estimate in
\eqref{eq:bridge-bounds} against $\nuxt$ and using
\eqref{eq:weighted-first-moment}.
The constants in \eqref{eq:directional-bound} depend only on $n,D$;
in particular they are uniform in $\lambda$ and small $t$.

\subsection{Oscillation along horizontal segments}

Let $\gamma:[0,\ell]\to S$ be a unit-speed horizontal segment in the
constant left-invariant direction $V$. Along this segment $V^2\Phi_t$
is the second derivative of $\Phi_t\circ\gamma$.
With $b_{t,V}$ as defined at the start of this section, integration of
\eqref{eq:directional-bound} gives
\[
 t\int_0^\ell b_{t,V}(\gamma(s))\dd s
 \le C\ell-[(V\Phi_t)(\gamma(s))]_{s=0}^{s=\ell}
 \le C\ell+2C.
\]
The covariance identity \eqref{eq:covariance} gives
$|Vu_t|\le\sqrt{a_t b_{t,V}}$. Cauchy--Schwarz along $\gamma$ then yields
\begin{equation}\label{eq:line}
 \begin{split}
 |u_t(\gamma(\ell))-u_t(\gamma(0))|^2
 &\le\left(\int_\gamma a_t\dd s\right)
       \left(\int_\gamma b_{t,V}\dd s\right)\\
 &\le\frac Ct\int_\gamma a_t\dd s.
 \end{split}
\end{equation}
The constant depends on an upper bound for the segment length.
This argument requires an estimate in each horizontal direction $V$:
a bound for the sub-Laplacian alone does not control the second
derivative along the chosen segment.

\subsection{A local \texorpdfstring{$L^2$}{L2} estimate on balls}

One horizontal segment produces the horizontal displacement, and a
commutator of four segments produces the remaining central displacement.
The following paths allow us to average \eqref{eq:line} over a ball.

\begin{lemma}\label{lem:horizontal-paths}
For $B=B(x_0,r)$ and each $x,y\in B$, there is a Borel choice of a
path $\gamma_{xy}\subset7B$ consisting of at most five horizontal
segments, joining $x$ to $y$, with length at most $6r$. For every
nonnegative Borel function $f$,
\begin{equation}\label{eq:occupation}
 \int_B\int_B\int_{\gamma_{xy}}f\dd s\dd\mm(y)\dd\mm(x)
 \le C_n r\,\mm(B)\int_{7B}f\dd\mm.
\end{equation}
\end{lemma}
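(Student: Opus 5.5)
We construct the path explicitly in group coordinates. By left translation and dilation we reduce to $x_0=e$ and $r=1$. Left translations preserve $\mm$, $\distcc$ and horizontal segments, and $\delta_r$ multiplies lengths by $r$ and volumes by $r^Q$; both sides of \eqref{eq:occupation} therefore scale as $r^{1+2Q}$ after accounting for $\int_\gamma f\dd s$ scaling by $r$. For $x,y\in B$, write $x^{-1}y=[\zeta,\tau]$. The path will begin at $x$, follow the left-invariant horizontal segment $s\mapsto x[s\zeta/|\zeta|,0]$ for $s\in[0,|\zeta|]$, and then produce the central displacement $\tau'=\tau$ (after the horizontal step the remaining displacement is purely central, $[0,\tau']$ with $\tau'$ determined Borel-measurably by $x,y$). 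For the central part we use a commutator of four segments. Fix horizontal unit directions $\omega_1=\partial_{\xi_1}$, $\omega_2=\partial_{\eta_1}$. Then $[a\omega_1][b\omega_2][-a\omega_1][-b\omega_2]=[0,c\,ab]$ with $|c|=4$. We choose $a=b=\sqrt{|\tau'|/4}$ and order the directions so that the sign matches. Since $\distcc(e,[\zeta,\tau])\ge c_n(|\zeta|+|\tau|^{1/2})$ with a sharp enough constant (the ball-box comparison), we get $|\zeta|\le 2$ and $4a\le 4$, so the total length is at most $6$ once the constants are checked. If this numerical bound is too tight, we enlarge the ball, which is why the statement uses $7B$. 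All points stay within distance $1+6=7$ of $e$, so $\gamma_{xy}\subset7B$. Borel measurability is clear because $\zeta,\tau'$ and the sign choice depend Borel-measurably on $(x,y)$.

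For \eqref{eq:occupation}, we split $\int_{\gamma_{xy}}f$ into at most five pieces and treat each piece $k$ separately. Each piece has the form $s\mapsto w_k(x,y)[s\omega_k(x,y),0]$, with $s\in[0,\ell_k]$ and $\ell_k\le6$. The key point is to change variables so that, for fixed $s$, the map sending $(x,y)$ to the point on the path is measure-controlled. For the first segment we write $y=x[\zeta,\tau]$ and integrate over $x\in B$ and $(\zeta,\tau)$ in a bounded set $E$ (Haar measure is invariant, so $\dd\mm(y)=\dd\zeta\dd\tau$). The point is $x[s\zeta/|\zeta|,0]$. For fixed $(\zeta,\tau,s)$ the map $x\mapsto x[s\zeta/|\zeta|,0]$ is a right translation, which preserves $\mm$ because the Haar measure is bi-invariant. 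Its integral is therefore at most $\int_{7B}f$, and integrating over $s\le 6$ and over $E$ gives $C\,\mm(B)\int_{7B}f$. The commutator pieces are handled the same way: the starting point of each is $x$ right-multiplied by an element depending only on $x^{-1}y$, and the moving point is again $x\cdot h(\zeta,\tau,s)$, so the same right-invariance argument applies. Summing five pieces and restoring the scaling factor $r$ gives the claimed bound $C_n r\,\mm(B)\int_{7B}f$.

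The main obstacle is the careful bookkeeping rather than any analysis. We need the explicit ball-box constants to guarantee the length bound $6r$ and containment in $7B$. We also need to confirm that every piece of the path is a right translate of $x$ by a quantity independent of $x$ once $x^{-1}y$ is fixed. This is exactly what makes bi-invariance of $\mm$ give the occupation bound with no Jacobian singularity. We would use a crude constant first, for instance $|\zeta|\le\distcc$ and $|\tau|\le C\distcc^2$, and adjust the stated constants if needed.
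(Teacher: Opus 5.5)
Your argument is essentially the paper's: one horizontal segment absorbs $\zeta$, and a four-segment commutator produces the remaining central displacement (indeed $[\zeta,0]^{-1}[\zeta,\tau]=[0,\tau]$). Then \eqref{eq:occupation} follows from the substitution $y=xh$, Fubini, and right-invariance of Haar measure applied to each path point $x\,q(h,s)$.

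The one step you leave unchecked is the one that fixes the constants $6r$ and $7B$. The generic ball--box comparison you cite does not supply it, and ``adjusting the stated constants'' would prove a different lemma. Also, a lower bound on $\distcc$ by $c_n(|\zeta|+|\tau|^{1/2})$ is not the right form. What you need are two separate bounds with constant one:
\[
|\zeta|\le\distcc(e,h),\qquad |\tau|\le\distcc(e,h)^2 .
\]
Both follow from horizontality. Take a unit-speed horizontal curve from $e$ to $h$ of length $\ell$. Then $|\dot\zeta|=1$, so $|\zeta(s)|\le s$. The horizontality equation $\dot\tau=2\operatorname{Im}\sum_j\zeta_j\overline{\dot\zeta_j}$ then gives $|\tau|\le\int_0^\ell 2s\dd s=\ell^2$.

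Since $\distcc(e,h)<2r$, your symmetric choice $a=b=\sqrt{|\tau|/4}$ gives a path of length
\[
|\zeta|+2\sqrt{|\tau|}\le 3\distcc(e,h)<6r .
\]
Hence $\gamma_{xy}\subset7B$, and no constants need to change.

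Your choice is slightly more economical than the paper's $\alpha=ire_1$, $\beta=(\tau/(4r))e_1$, whose length $|\zeta|+2r+|\tau|/(2r)$ is at least $|\zeta|+2\sqrt{|\tau|}$ by AM--GM. It also does not depend on $r$, which makes your preliminary reduction to $x_0=e$, $r=1$ immediate. Both choices rest on the same estimate $|\tau|<4r^2$.
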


\begin{proof}
For $h=x^{-1}y=[\zeta,\tau]\in B(e,2r)$, one has
$|\zeta|\le2r$ and $|\tau|\le4r^2$. For the latter bound, join $e$
to $h$ by a unit-speed horizontal curve of length $\ell<2r$.
Then $|\dot\zeta(s)|=1$ almost everywhere and $|\zeta(s)|\le s$.
The equation $\dot\tau=2\operatorname{Im}\sum_j\zeta_j\overline{\dot\zeta_j}$
gives $|\tau|\le2\int_0^\ell s\dd s=\ell^2$.
Let $e_1$ be the first coordinate vector in $\mathbb C^n$ and put
$\alpha=ir e_1$, $\beta=(\tau/(4r))e_1$. The group law gives
\[
 h=[\zeta,0][\alpha,0][\beta,0][-\alpha,0][-\beta,0].
\]
Indeed, the last four factors have zero horizontal component and
central component $4\operatorname{Im}(\alpha_1\overline{\beta_1})=\tau$.
Following these five factors along horizontal segments defines a path
$\eta_h$ from $e$ to $h$. Its length is at most
$|\zeta|+2r+|\tau|/(2r)\le6r$. The path
$\gamma_{xy}=x\eta_h$ lies in $7B$ and depends Borel measurably on
$(x,y)$, including when a segment has length zero.

For $x\in B$ and $q\in\eta_h$, left invariance and the triangle
inequality give
\[
 \distcc(x_0,xq)\le\distcc(x_0,x)+\distcc(x,xq)
 =\distcc(x_0,x)+\distcc(e,q)<7r.
\]
Thus $Bq\subset7B$. Change variables $y=xh$ and use Fubini.
Right translation $x\mapsto xq$ preserves $\mm$, so
\[
 \begin{split}
 &\int_B\int_B\int_{\gamma_{xy}}f\dd s\dd\mm(y)\dd\mm(x)\\
 =&\int_{B(e,2r)}\int_{\eta_h}
       \int_{B\cap Bh^{-1}}f(xq)\dd\mm(x)\dd s\dd\mm(h)\\
 \leq&\int_{B(e,2r)}\ell(\eta_h)\dd\mm(h)\int_{7B}f\dd\mm\\
 \leq&6r\,\mm(B(e,2r))\int_{7B}f\dd\mm.
 \end{split}
\]
Since $\mm(B(e,2r))=2^Q\mm(B)$, this proves \eqref{eq:occupation}.
\end{proof}

Take a ball with $7B\subset S$. The lengths of the preceding segments
are bounded in terms of $D$. Summing \eqref{eq:line} over at most five
segments and applying Cauchy--Schwarz to this finite sum gives
\[
 |u_t(x)-u_t(y)|^2\le\frac Ct\int_{\gamma_{xy}}a_t\dd s.
\]
The pairwise expression for variance and \eqref{eq:occupation} now give
\begin{equation}\label{eq:heis-local}
 \begin{split}
 \int_B|u_t-(u_t)_B|^2\dd\mm
 &=\frac1{2\mm(B)}\int_B\int_B|u_t(x)-u_t(y)|^2\dd\mm(y)\dd\mm(x)\\
 &\le\frac{Cr}{t}\int_{7B}a_t\dd\mm.
 \end{split}
\end{equation}
This is \eqref{eq:local} with dilation factor $7$. Section~\ref{subsec:gluing}
therefore gives \eqref{eq:central}; Proposition~\ref{prop:abstract-stability}
completes the heat kernel proof of Theorem~\ref{thm:heis}.

\section{Comparison of the heat kernel estimates}

Both arguments obtain the $t^{-1}$ bound by integrating second spatial
derivatives. On RCD spaces, integration by parts against a cutoff
controls the $\Delta\log G_t$ term in \eqref{eq:trace}, while the
Li--Yau inequality controls the other term. On Heisenberg groups,
integration of $V^2\Phi_t$ along a horizontal segment gives first
derivatives at its endpoints. The passage from these estimates to
local $L^2$ bounds is different in the two cases:
\begin{center}
\begin{tabular}{@{}ll@{}}
\hline
RCD spaces & Heisenberg groups\\
\hline
$\Delta_x\log p_{t/2}$ & $V_x^2\log p_{t/4}$\\
Li--Yau inequality & Estimates away from the cut locus\\
Integration over balls & Integration along horizontal segments\\
Summation over radii $2^{-j}R$ & Averaging over horizontal paths\\
\hline
\end{tabular}
\end{center}
An extension to other sub-Riemannian spaces would require uniform
derivative moment bounds as in Proposition~\ref{prop:bridge-bounds}
and a path integral estimate as in Lemma~\ref{lem:horizontal-paths},
together with the analytic and
transport assumptions used in Sections~\ref{sec:regularization}
and~\ref{sec:variance}. Abnormal minimizing curves may prevent the use
of the derivative estimates and the result on geodesic midpoints
in~\cite[Theorems~5.3, 5.10 and Lemma~3.3]{NeelSacchelli}.
In the absence of exact dilations, the normalization to a fixed compact
set also requires a different argument.


\begin{thebibliography}{99}
\raggedright
\interlinepenalty=10000

\bibitem{AgrachevBarilariBoscain}
A. Agrachev, D. Barilari and U. Boscain,
\emph{A Comprehensive Introduction to Sub-Riemannian Geometry},
Cambridge Stud. Adv. Math., vol.~181, Cambridge University Press,
Cambridge, 2020.

\bibitem{AmbrosioGigliSavare}
L. Ambrosio, N. Gigli and G. Savar\'e,
Metric measure spaces with Riemannian Ricci curvature bounded from below,
Duke Math. J. \textbf{163} (2014), 1405--1490.

\bibitem{AmbrosioRigot}
L. Ambrosio and S. Rigot,
Optimal mass transportation in the Heisenberg group,
J. Funct. Anal. \textbf{208} (2004), 261--301.

\bibitem{BarilariBoscainNeel}
D. Barilari, U. Boscain and R. W. Neel,
Small time heat kernel asymptotics at the sub-Riemannian cut locus,
J. Differential Geom. \textbf{92} (2012), 373--416.

\bibitem{BenArous}
G. Ben Arous,
D\'eveloppement asymptotique du noyau de la chaleur hypoelliptique hors du cut-locus,
Ann. Sci. \'Ecole Norm. Sup. (4) \textbf{21} (1988), 307--331.

\bibitem{Berman}
R. J. Berman,
Convergence rates for discretized Monge--Amp\`ere equations and
quantitative stability of optimal transport,
Found. Comput. Math. \textbf{21} (2021), 1099--1140.

\bibitem{Brenier}
Y. Brenier,
Polar factorization and monotone rearrangement of vector-valued functions,
Comm. Pure Appl. Math. \textbf{44} (1991), 375--417.

\bibitem{BuckleyKoskelaLu}
S. M. Buckley, P. Koskela and G. Lu,
Boman equals John,
in \emph{Proceedings of the XVI Rolf Nevanlinna Colloquium},
de Gruyter, Berlin, 1996, pp.~91--99.

\bibitem{DelalandeMerigot}
A. Delalande and Q. M\'erigot,
Quantitative stability of optimal transport maps under variations of the target measure,
Duke Math. J. \textbf{172} (2023), 3321--3357.

\bibitem{EldredgePrecise}
N. Eldredge,
Precise estimates for the subelliptic heat kernel on H-type groups,
J. Math. Pures Appl. (9) \textbf{92} (2009), 52--85.

\bibitem{ErbarKuwadaSturm}
M. Erbar, K. Kuwada and K.-T. Sturm,
On the equivalence of the entropic curvature-dimension condition and
Bochner's inequality on metric measure spaces,
Invent. Math. \textbf{201} (2015), 993--1071.

\bibitem{GigliRajalaSturm}
N. Gigli, T. Rajala and K.-T. Sturm,
Optimal maps and exponentiation on finite-dimensional spaces with Ricci curvature bounded from below,
J. Geom. Anal. \textbf{26} (2016), 2914--2929.

\bibitem{HajlaszKoskela}
P. Haj\l{}asz and P. Koskela,
Sobolev met Poincar\'e,
Mem. Amer. Math. Soc. \textbf{145} (2000), no.~688.

\bibitem{HanZhuRCD}
B.-X. Han and Z.-N. Zhu,
Stability of optimal transport on metric measure spaces,
preprint, \href{https://arxiv.org/abs/2602.19175v2}{arXiv:2602.19175v2}, 2026.

\bibitem{HanZhuBarycenters}
B.-X. Han and Z.-N. Zhu,
Quantitative stability of Wasserstein barycenters over Alexandrov spaces
with lower curvature bounds,
preprint, \href{https://arxiv.org/abs/2605.25448v1}{arXiv:2605.25448v1}, 2026.

\bibitem{HanZhuMultimarginal}
B.-X. Han and Z.-N. Zhu,
Two-mode stability for multi-marginal optimal transport maps,
preprint, \href{https://arxiv.org/abs/2606.23037v1}{arXiv:2606.23037v1}, 2026.

\bibitem{Jiang}
R. Jiang,
The Li--Yau inequality and heat kernels on metric measure spaces,
J. Math. Pures Appl. (9) \textbf{104} (2015), 29--57.

\bibitem{JiangLiZhang}
R. Jiang, H. Li and H. Zhang,
Heat kernel bounds on metric measure spaces and some applications,
Potential Anal. \textbf{44} (2016), 601--627.

\bibitem{Juillet}
N. Juillet,
Geometric inequalities and generalized Ricci bounds in the Heisenberg group,
Int. Math. Res. Not. IMRN \textbf{2009} (2009), 2347--2373.

\bibitem{KitagawaLetrouitMerigot}
J. Kitagawa, C. Letrouit and Q. M\'erigot,
Stability of optimal transport maps on Riemannian manifolds,
preprint, \href{https://arxiv.org/abs/2504.05412v2}{arXiv:2504.05412v2}, 2025.

\bibitem{LetrouitMerigot}
C. Letrouit and Q. M\'erigot,
Gluing methods for quantitative stability of optimal transport maps,
to appear in Ann. Sci. \'Ecole Norm. Sup.; \href{https://arxiv.org/abs/2411.04908v3}{arXiv:2411.04908v3}, 2026.

\bibitem{LetrouitLecture}
C. Letrouit,
\href{https://www.imo.universite-paris-saclay.fr/~cyril.letrouit/teaching/Peccotfinal.pdf}
{Lectures on quantitative stability of optimal transport},
Cours Peccot lecture notes, Coll\`ege de France, 2025.

\bibitem{McCann}
R. J. McCann,
Polar factorization of maps on Riemannian manifolds,
Geom. Funct. Anal. \textbf{11} (2001), 589--608.

\bibitem{McShane}
E. J. McShane,
Extension of range of functions,
Bull. Amer. Math. Soc. \textbf{40} (1934), 837--842.

\bibitem{MerigotDelalandeChazal}
Q. M\'erigot, A. Delalande and F. Chazal,
Quantitative stability of optimal transport maps and linearization of the
$2$-Wasserstein space,
Proc. Mach. Learn. Res. \textbf{108} (2020), 3186--3196.

\bibitem{NeelSacchelli}
R. W. Neel and L. Sacchelli,
Uniform, localized asymptotics for sub-Riemannian heat kernels, their logarithmic derivatives, and associated diffusion bridges,
J. Theoret. Probab. \textbf{39} (2026), no.~3, Paper No.~57, 103~pp.
Numbered results cited here refer to
\href{https://arxiv.org/abs/2012.12888v2}{arXiv:2012.12888v2}.

\bibitem{Petrunin}
A. Petrunin,
Alexandrov meets Lott--Villani--Sturm,
M\"unster J. Math. \textbf{4} (2011), 53--64.

\bibitem{Rajala}
T. Rajala,
Local Poincar\'e inequalities from stable curvature conditions on metric spaces,
Calc. Var. Partial Differential Equations \textbf{44} (2012), 477--494.

\bibitem{Rifford}
L. Rifford,
\emph{Sub-Riemannian Geometry and Optimal Transport},
SpringerBriefs in Mathematics, Springer, Cham, 2014.

\bibitem{Villani}
C. Villani,
\emph{Optimal Transport: Old and New},
Grundlehren Math. Wiss., vol.~338, Springer, Berlin, 2009.

\end{thebibliography}
\end{document}